\documentclass{kms-c}

%%% Start of the area for technical editor.

%\newcommand{\doiname}{http://dx.doi.org/10.4134/CKMS}
\issueinfo{}% volume number
  {}%        % issue number
  {}%        % month
  {}%     % year
\pagespan{1}{}
%\received{Received January 5, 2005}
%\received{Received August 25, 2005;\enspace Revised October 20, 2005}
\copyrightinfo{}%              % copyright year
  {Korean Mathematical Society}% copyright holder
%%% End of the area for technical editor.

\usepackage{graphicx}
\usepackage{csquotes}
\allowdisplaybreaks

\theoremstyle{plain}

\theoremstyle{definition}

\theoremstyle{remark}

\newtheorem*{theoA}{Theorem A}
\newtheorem*{theoB}{Theorem B}
\newtheorem*{theoC}{Theorem C}
\newtheorem*{theoD}{Theorem D}
\newtheorem*{theoE}{Theorem E}
\newtheorem*{theoF}{Theorem F}
\newtheorem*{theoG}{Theorem G}

\newtheorem{theo}{Theorem}[section]
\newtheorem{lem}{Lemma}[section]

\newtheorem{exm}{Example}[section]
\newtheorem{defi}{Definition}[section]
\newtheorem{rem}{Remark}[section]
\newtheorem{question}{Question}[section]
\newcommand{\ol}{\overline}
\newcommand{\be}{\begin{equation}}
\newcommand{\ee}{\end{equation}}
\newcommand{\beas}{\begin{eqnarray*}}
\newcommand{\eeas}{\end{eqnarray*}}
\newcommand{\bea}{\begin{eqnarray}}
\newcommand{\eea}{\end{eqnarray}}

%%%%%%%%%%%%%%%%%%%%%%%%%%%%%%%%%%%%%%%%%%%%%%%%%%%%%%%%%%%%%%%%%%%%%%%%%%%%%%%%%%%%%%%%%
\begin{document}

\title[A note on the value distribution of Differential Polynomials]
{A note on the value distribution of Differential Polynomials}

\author[S. S. Bhoosnurmath]{Subhas S. Bhoosnurmath}
\address{Subhas S. Bhoosnurmath \\ Department of Mathematics\\  Karnatak University\\  Dharwad 580 003\\ Karnatak, India}
\email{ssbmath@gmail.com}

\author[B. Chakraborty]{Bikash Chakraborty}
\address{Bikash Chakraborty \\ Department of Mathematics\\ Ramakrishna Mission Vivekananda Centenary College\\ Kolkata 700118\\West Bengal, India}
\email{bikashchakraborty.math@yahoo.com, bikash@rkmvccrahara.org}

\author[H. M. Srivastava]{H. M. Srivastava}
\address{H. M. Srivastava \\Department of Mathematics and Statistics\\ University of Victoria\\ Victoria, British Columbia V8W 3R4, Canada}
\address{H. M. Srivastava \\Department of Medical Research\\China Medical University Hospital\\ China Medical University\\ Taichung 40402\\ Taiwan, Republic of China}
\email{harimsri@math.uvic.ca}
%\thanks{This work was financially supported by KRF 2003-041-C20009}

\subjclass{Primary 30D30, 30D20, 30D35}
\keywords{Transcendental Meromorphic function, Differential Polynomials}

\begin{abstract}
Let $f$ be a transcendental meromorphic function, defined in the complex plane $\mathbb{C}$. In this paper, we give a quantitative estimations of the characteristic function $T(r,f)$ in terms of the counting function of a homogeneous differential polynomial generated by $f$. Our result improves and generalizes some recent results.
\end{abstract}

\maketitle

\section{Introduction}
Throughout of this article, we adopt the standard notations and results of classical value distribution theory [See, Hayman's Monograph (\cite{8})]. A meromorphic function $g$ is said to be rational if and only if $T(r,g)=O(\log r)$, otherwise, $g$ is called transcendental meromorphic function. Let $f$ be a transcendental meromorphic function, defined in the complex plane $\mathbb{C}$.\par
We denote by $S(r, f)$, the quantity satisfying $$S(r, f) = o(T(r, f))~~\text{as}~~r\to\infty,~ r\not\in E,$$ where $E$ is a subset of positive real numbers of finite linear measure, not necessarily the same at each occurrence.\par
In addition, in this paper, we also use another type of notation $S^{*}(r,f)$ which is defined as
$$S^{*}(r,f)=o(T(r,f))~~\text{as}~~r\to\infty,~r\not\in E^{*},$$
where $E^{*}$ is a subset of positive real numbers of logarithmic density $0$.
\begin{defi}
A meromorphic function $b(z)(\not\equiv 0,\infty)$ defined in $\mathbb{C}$ is called a \enquote{small function} with respect to $f$ if $T(r,b(z))=S(r,f)$.
\end{defi}
\begin{defi}
Let $k$ be a positive integer, for any constant $a$ in the complex plane. We denote
\begin{enumerate}
\item [i)] by $N_{k)}(r,\frac{1}{( f -a)})$ the counting function of $a$-points of $f$ with multiplicity $\leq k$,
\item [ii)] by $N_{(k}(r,\frac{1}{( f -a)})$ the counting function of $a$-points of $f$ with multiplicity $\geq k$.
\end{enumerate}
Similarly, the reduced counting functions $\ol{N}_{k)}(r,\frac{1}{( f -a)})$ and $\ol{N}_{(k}(r,\frac{1}{( f -a)})$ are defined.
\end{defi}
\begin{defi}(\cite{ld})
For a positive integer $k$, we denote $N_{k}(r,0;f)$ the counting function of zeros of $f$, where a zero of $f$ with multiplicity $q$ is counted $q$ times if $q\leq k$, and is counted $k$ times if  $q> k$.
\end{defi}
In 1979, E. Mues (\cite{m}) proved that for a transcendental meromorphic function $f(z)$ in $\mathbb{C}$, $f^{2}f'-1$ has infinitely many zeros.\par
Later, in 1992, Q. Zhang (\cite{qz}) proved a quantitative version of Mues's result as follows:
\begin{theoA} For a transcendental meromorphic function $f$, the following inequality holds :
$$T(r,f)\leq 6N\bigg(r,\frac{1}{f^{2}f'-1}\bigg)+S(r,f).$$
\end{theoA}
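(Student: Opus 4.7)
Let $F := f^{2}f'$. My plan is to apply Nevanlinna's Second Fundamental Theorem to $F$ with target values $0, 1, \infty$, to compare $T(r, F)$ with $T(r, f)$ using the identity $3F = (f^{3})'$, and finally to absorb the resulting spurious counting functions by analyzing a carefully chosen auxiliary function with controlled proximity.

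I first establish the relationship between $T(r, F)$ and $T(r, f)$. Writing $F = f^{3}(f'/f)$ and using $m(r, f'/f) = S(r, f)$ gives $m(r, F) \leq 3m(r, f) + S(r, f)$. Pole counting ($p \mapsto 3p + 1$ at each pole of $f$) yields $N(r, F) = 3N(r, f) + \overline{N}(r, f)$, so $T(r, F) \leq 3T(r, f) + \overline{N}(r, f) + S(r, f)$. Conversely, writing $f^{3} = F/(f'/f)$ produces $3T(r, f) \leq T(r, F) + \overline{N}(r, f) + \overline{N}(r, 1/f) + S(r, f)$.

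Next I apply SFT to $F$ with values $0, 1, \infty$:
\[
T(r, F) \leq \overline{N}(r, F) + \overline{N}(r, 1/F) + \overline{N}(r, 1/(F-1)) + S(r, f).
\]
Since zeros of $F = f^{2}f'$ come either from zeros of $f$ or from zeros of $f'$ where $f \neq 0$, and $\overline{N}(r, F) = \overline{N}(r, f)$, one has $\overline{N}(r, 1/F) \leq \overline{N}(r, 1/f) + \overline{N}_{0}(r, 1/f')$, where $\overline{N}_{0}(r, 1/f')$ denotes the reduced counting function of zeros of $f'$ at which $f$ is nonzero. Combining with the bounds of the previous paragraph,
\[
3T(r, f) \leq 2\overline{N}(r, f) + 2\overline{N}(r, 1/f) + \overline{N}_{0}(r, 1/f') + \overline{N}(r, 1/(F-1)) + S(r, f).
\]

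The heart of the proof is then to absorb the three auxiliary counts $\overline{N}(r, f)$, $\overline{N}(r, 1/f)$, $\overline{N}_{0}(r, 1/f')$ into a multiple of $N(r, 1/(F-1))$. For this I would introduce an auxiliary meromorphic function, for instance
\[
\Psi := \frac{F'}{F-1} - 3\,\frac{f'}{f},
\]
together with related variants involving $f''/f'$. By the logarithmic derivative lemma $m(r, \Psi) = S(r, f)$; a direct residue analysis shows that $\Psi$ has only simple poles, located at zeros of $F - 1$ (with residue equal to the multiplicity), at zeros of $f$ (with residue $-3q$ at a zero of order $q$), and at poles of $f$ (with residue $-1$). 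This yields the identity $T(r, \Psi) = \overline{N}(r, 1/(F-1)) + \overline{N}(r, 1/f) + \overline{N}(r, f) + S(r, f)$, which, combined with subadditive upper bounds on $T(r, \Psi)$ and a parallel analysis controlling $\overline{N}_{0}(r, 1/f')$, closes the chain of estimates. The main obstacle lies in the intricate multiplicity bookkeeping required to yield exactly the coefficient $6$, particularly in the passage from reduced counting functions $\overline{N}$ to the full counting function $N(r, 1/(F-1))$ appearing in the statement; the factor $6$ arises from combining the multiplicity inflations $q \mapsto 3q - 1$ at zeros of $f$ and $p \mapsto 3p + 1$ at poles of $f$ with the doubling of $\overline{N}$ terms in the combined inequality.
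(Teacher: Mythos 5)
First, note that the paper does not prove Theorem A at all: it is quoted from Zhang's 1992 paper \cite{qz} as motivation, and the machinery the authors actually develop (Lemmas 3.3--3.5, built on the identity $\frac{1}{f^{d(P)}}=\frac{bP[f]}{f^{d(P)}}-\frac{(bP[f])'}{bP[f]}\cdot\frac{bP[f]}{f^{d(P)}}\cdot\frac{bP[f]-1}{(bP[f])'}$ and on counting the zeros of $(P[f])'$ sitting over zeros of $f$) takes a different route from yours, namely a Milloux-type estimate rather than the Second Fundamental Theorem applied to $F$.

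Your preparatory reductions are correct: $T(r,F)\leq 3T(r,f)+\ol{N}(r,f)+S(r,f)$, the converse inequality, $\ol{N}(r,F)=\ol{N}(r,f)$, $\ol{N}\left(r,\frac{1}{F}\right)\leq \ol{N}\left(r,\frac{1}{f}\right)+\ol{N}_{0}\left(r,\frac{1}{f'}\right)$, and the combined inequality $3T(r,f)\leq 2\ol{N}(r,f)+2\ol{N}\left(r,\frac{1}{f}\right)+\ol{N}_{0}\left(r,\frac{1}{f'}\right)+\ol{N}\left(r,\frac{1}{F-1}\right)+S(r,f)$ all check out. But at this point the right-hand side still dominates $4T(r,f)$, so nothing has been proved yet; the entire content of the theorem is the absorption step, and that step is not carried out. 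Worse, the auxiliary function you propose is oriented the wrong way: you verify (correctly) that $\Psi=\frac{F'}{F-1}-3\frac{f'}{f}$ has simple poles precisely at the zeros of $F-1$, the zeros of $f$, and the poles of $f$, whence $T(r,\Psi)=\ol{N}\left(r,\frac{1}{F-1}\right)+\ol{N}\left(r,\frac{1}{f}\right)+\ol{N}(r,f)+S(r,f)$. This is an identity, not an inequality in the useful direction: to extract $\ol{N}(r,f)+\ol{N}\left(r,\frac{1}{f}\right)\leq C\,N\left(r,\frac{1}{F-1}\right)+S(r,f)$ from it you would need an independent upper bound for $T(r,\Psi)$ in terms of $N\left(r,\frac{1}{F-1}\right)$ alone, and none is given; as stated the identity is circular. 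The auxiliary functions that make Mues--Zhang arguments work are engineered so that the points to be absorbed are \emph{zeros} of the auxiliary function while its poles lie only over the zero set of $F-1$; then the First Fundamental Theorem converts the zero count into the desired pole count. You also never address $\ol{N}_{0}\left(r,\frac{1}{f'}\right)$, never pass from the reduced counting function $\ol{N}\left(r,\frac{1}{F-1}\right)$ to the unintegrated $N\left(r,\frac{1}{F-1}\right)$ of the statement, and never actually produce the constant $6$ --- you acknowledge all of this as ``the main obstacle,'' which is exactly where the proof lives. As it stands the proposal is a plan, not a proof.
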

In this direction, X. Huang and Y. Gu, (\cite{hg}) further generalised Theorem A. They proved the following result:
\begin{theoB} Let $f$ be a transcendental meromorphic function and $k$ be a positive integer. Then
$$T(r,f)\leq 6N\bigg(r,\frac{1}{f^{2}f^{(k)}-1}\bigg)+S(r,f).$$
\end{theoB}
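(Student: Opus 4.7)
The plan is to set $F := f^{2} f^{(k)}$, apply Nevanlinna's second fundamental theorem to $F$ with the three target values $0$, $1$, $\infty$, and then translate the resulting inequality back to $T(r,f)$ using the logarithmic derivative lemma and a careful pole--zero analysis of $F$.

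First I would establish a two-sided comparison between $T(r,F)$ and $T(r,f)$. The logarithmic derivative lemma yields $m\bigl(r,f^{(k)}/f\bigr) = S(r,f)$, while a direct counting of pole multiplicities (a pole of $f$ of order $p$ produces a pole of $F$ of order $3p+k$) gives $N(r,F) \leq 3N(r,f) + k\,\overline{N}(r,f)$. Combining these, $T(r,F) \leq 3T(r,f) + k\,\overline{N}(r,f) + S(r,f)$, and the reverse inequality $3T(r,f) \leq T(r,F) + S(r,f)$ follows from writing $f^{3} = f\cdot F/ f^{(k)}$ and invoking the logarithmic derivative lemma once more.

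Next, applying Nevanlinna's second fundamental theorem to $F$,
$$T(r,F) \leq \overline{N}(r,F) + \overline{N}\bigl(r,1/F\bigr) + \overline{N}\bigl(r,1/(F-1)\bigr) + S(r,F).$$
Here $\overline{N}(r,F) \leq \overline{N}(r,f)$, while $\overline{N}(r,1/F)$ splits into zeros of $f$ and the ``spurious'' zeros of $f^{(k)}$ that do not come from zeros of $f$. The second fundamental theorem applied to $f$ itself is then used to replace $\overline{N}(r,f) + \overline{N}(r,1/f)$ by quantities controlled by $T(r,f)$ and $S(r,f)$.

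The main obstacle is bounding the extra zeros $\overline{N}\bigl(r,1/f^{(k)}\bigr) - \overline{N}(r,1/f)$, which are invisible from the zero set of $F-1$. To handle this I would introduce an auxiliary function of the form $\phi := F'/(F-1) - \alpha\,F'/F - \beta\,f'/f$, with constants $\alpha,\beta$ chosen so that the singularities of $\phi$ at the zeros and poles of $f$ cancel; then $m(r,\phi) = S(r,f)$ by the logarithmic derivative lemma, and a residue analysis at the remaining poles of $\phi$ converts the unwanted counting function into $N\bigl(r,1/(F-1)\bigr)$ plus terms of type $S(r,f)$. Collecting all estimates and solving the resulting linear inequality for $T(r,f)$ should produce the coefficient $6$ in the claimed bound, generalising Zhang's strategy for Theorem A by using the logarithmic derivative lemma for the $k$-th derivative throughout.
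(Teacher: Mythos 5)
First, a point of comparison: the paper does not prove Theorem B at all --- it is quoted verbatim from Huang and Gu \cite{hg}, and none of the paper's own results cover the monomial $f^{2}f^{(k)}$ (Theorem \ref{th1} needs $q_{k}\geq 2$, Theorem \ref{th2} needs $d(P)-\nu=3-k>2$, and Theorem \ref{th3} needs $3+2k>2(k+1)+k$, i.e.\ $k<1$). So your attempt can only be judged on its own merits, and it has two genuine gaps. The first is the reverse comparison $3T(r,f)\leq T(r,F)+S(r,f)$, which you derive from $f^{3}=f\cdot F/f^{(k)}$ ``by the logarithmic derivative lemma once more.'' That lemma bounds $m\bigl(r,f^{(k)}/f\bigr)$, not $m\bigl(r,f/f^{(k)}\bigr)$; the latter is only controlled via $m\bigl(r,f/f^{(k)}\bigr)\leq T\bigl(r,f^{(k)}/f\bigr)+O(1)=N\bigl(r,f^{(k)}/f\bigr)+S(r,f)$, which reintroduces $\overline{N}(r,f)$ and the zeros of $f$ and is emphatically not $S(r,f)$ in general. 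The correct route (the one underlying Lemma \ref{lem4} here and Lahiri--Dewan's Theorem C) is the identity $\frac{1}{f^{d}}=\frac{F}{f^{d}}-\frac{F'}{F}\cdot\frac{F}{f^{d}}\cdot\frac{F-1}{F'}$, which bounds $d\,m(r,1/f)$ by $m\bigl(r,(F-1)/F'\bigr)+S(r,f)$ and, after the first fundamental theorem and Lemma \ref{lem1}, yields $3T(r,f)\leq 3N(r,1/f)+\overline{N}(r,\infty;f)+N\bigl(r,1/(F-1)\bigr)-N\bigl(r,1/F'\bigr)+S(r,f)$. The entire difficulty of Theorem B is then absorbing the terms $3N(r,1/f)$ and $\overline{N}(r,\infty;f)$, not deducing $T(r,F)\geq 3T(r,f)$.

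The second gap is that the step where the constant $6$ must actually be produced is left as an unspecified residue computation. For your $\phi=F'/(F-1)-\alpha F'/F-\beta f'/f$, cancellation at a pole of $f$ of arbitrary order $p$ forces $(1-\alpha)(3p+k)+\beta p=0$ for all $p$, hence $\alpha=1$, $\beta=0$, i.e.\ $\phi=F'/(F(F-1))$ --- the standard auxiliary function, which does not by itself control the ``spurious'' zeros counted by $N\bigl(r,1/f^{(k)}\bigr)-N(r,1/f)$, precisely the quantity you flag as the main obstacle. In the actual Huang--Gu argument (and in Zhang's proof of Theorem A) the coefficient $6$ emerges from a case analysis in which certain auxiliary combinations either vanish identically (forcing $f$ to satisfy a differential equation that must be ruled out separately) or have controlled proximity functions; ``collecting all estimates and solving the resulting linear inequality'' skips exactly this. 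Note also that the term $k\,\overline{N}(r,f)$ in your upper bound for $T(r,F)$ grows with $k$, so any bookkeeping that carries it through cannot yield a $k$-independent constant $6$ without an additional idea (in the paper's framework this role is played by counting the zeros of $(P[f])'$ at zeros of $f$ and $f^{(k)}$, as in the proof of Lemma \ref{lem5}, or by Yamanoi's inequality, Lemma \ref{lem1.1}).
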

Next we recall the following definition:
\begin{defi} Let $q_{0j},q_{1j},\ldots,q_{kj}$ be non negative integers. Then the expression
$$M_{j}[f]=(f)^{q_{0j}}(f^{(1)})^{q_{1j}}\ldots(f^{(k)})^{q_{kj}}$$
is called a differential monomial generated by $f$ of degree $d(M_{j})=\sum\limits_{i=0}^{k}q_{ij}$ and weight
$\Gamma_{M_{j}}=\sum\limits_{i=0}^{k}(i+1)q_{ij}$. The sum
$$P[f]=\sum\limits_{j=1}^{t}b_{j}M_{j}[f]$$ is called a differential polynomial generated by $f$ of degree $\ol{d}(P)=\max\{d(M_{j}):1\leq j\leq t\}$
and weight $\Gamma_{P}=\max\{\Gamma_{M_{j}}:1\leq j\leq t\}$, where $T(r,b_{j})=S(r,f)$ for $j=1,2,\ldots,t$.\par
The numbers $\underline{d}(P)=\min\{d(M_{j}):1\leq j\leq t\}$ and $k$(the highest order of the derivative of $f$ in $P[f]$) are called respectively the lower degree and order of $P[f]$.\par
The differential polynomial $P[f]$ is said to be homogeneous if $\ol{d}(P)$=$\underline{d}(P)$, otherwise, $P[f]$ is called non-homogeneous Differential Polynomial. The term $d(P)=\ol{d}(P)=\underline{d}(P)$ is called the degree of the homogeneous differential polynomial $P[f]$.\par
We also denote by $\nu =\max\; \{\Gamma _{M_{j}}-d(M_{j}): 1\leq j\leq t\}=\max\; \{ q_{1j}+2q_{2j}+\ldots+kq_{kj}: 1\leq j\leq t\}$.
\end{defi}
In 2003, I. Lahiri and S. Dewan (\cite{ld}) considered the value distribution of a differential polynomial in more general settings.
They proved the following theorem.
\begin{theoC}
Let $f$ be a transcendental meromorphic function and $\alpha=\alpha(z)(\not\equiv 0,\infty)$ be a small function of $f$. If  $\psi=\alpha(f)^{n}(f^{(k)})^{p}$, where $n(\geq 0)$ $p(\geq 1)$, $k(\geq 1)$ are integers, then for any small function $a=a(z)(\not\equiv 0,\infty)$ of $\psi$,
$$(p + n)T(r, f)\leq \overline{N}(r,\infty; f) + \overline{N}(r, 0; f) + pN_{k}(r, 0; f) + \overline{N}(r, a; \psi) + S(r, f).$$
\end{theoC}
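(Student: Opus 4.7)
The plan is to reduce the desired lower bound on $T(r,f)$ to the second fundamental theorem applied to $\psi$, exploiting the fact that $\psi=\alpha f^{n}(f^{(k)})^{p}$ is built from $f$ and its derivatives.

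First I would start from the algebraic identity
$$\frac{1}{f^{n+p}}=\frac{\alpha}{\psi}\left(\frac{f^{(k)}}{f}\right)^{p},$$
take proximity functions on both sides, and apply the lemma on the logarithmic derivative together with $m(r,\alpha)=S(r,f)$ to obtain
$$(n+p)\,m\!\left(r,\frac{1}{f}\right)\leq m\!\left(r,\frac{1}{\psi}\right)+S(r,f).$$
Combined with the first fundamental theorem $m(r,1/\psi)=T(r,\psi)-N(r,1/\psi)+O(1)$ and the identity $(n+p)T(r,f)=(n+p)m(r,1/f)+(n+p)N(r,1/f)+O(1)$, this yields
$$(n+p)\,T(r,f)\leq T(r,\psi)-N\!\left(r,\frac{1}{\psi}\right)+(n+p)\,N\!\left(r,\frac{1}{f}\right)+S(r,f).$$

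Next I would apply the second fundamental theorem to $\psi$ with target values $0$, $a$, $\infty$, noting $S(r,\psi)=S(r,f)$ since $T(r,\psi)\leq(n+p)T(r,f)+S(r,f)$:
$$T(r,\psi)\leq\overline{N}(r,\infty;\psi)+\overline{N}(r,0;\psi)+\overline{N}(r,a;\psi)+S(r,f).$$
Substituting into the previous display reduces the problem to establishing the two pole/zero estimates
$$\overline{N}(r,\infty;\psi)\leq\overline{N}(r,\infty;f)+S(r,f),$$
$$\overline{N}(r,0;\psi)-N(r,0;\psi)+(n+p)N(r,0;f)\leq\overline{N}(r,0;f)+pN_{k}(r,0;f)+S(r,f).$$

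The first bound is immediate because every pole of $\psi$ comes from a pole of $f$ or a pole of the small function $\alpha$. For the second, the main step is a careful local computation: at a zero $z_0$ of $f$ of multiplicity $m$, the factor $(f^{(k)})^{p}$ contributes a zero of order $p\max(m-k,0)$ to $\psi$, so $\psi$ has a zero at $z_0$ of total multiplicity $(n+p)m-p\min(m,k)$. Its contribution to the left-hand side above is $1-[(n+p)m-p\min(m,k)]+(n+p)m=1+p\min(m,k)$, matching exactly its contribution $1+p\min(m,k)$ to $\overline{N}(r,0;f)+pN_{k}(r,0;f)$. Zeros of $\psi$ that are not zeros of $f$ contribute non-positively to the left side, and zeros/poles of $\alpha$ are absorbed into $S(r,f)$; hence the inequality follows.

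The main obstacle I anticipate is precisely this multiplicity bookkeeping: one must separately handle the case $m<k$, in which $f^{(k)}$ is generically nonzero at $z_0$ and contributes nothing to the zeros of $\psi$, and the case $m\geq k$, in which $f^{(k)}$ vanishes to order $m-k$. It is exactly this dichotomy that forces the truncated counting function $N_{k}(r,0;f)$ and the separate reduced term $\overline{N}(r,0;f)$ to appear with the correct coefficients in the final estimate.
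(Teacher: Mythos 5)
Your argument is essentially correct, but note first that the paper never proves Theorem C: it is quoted verbatim from Lahiri and Dewan \cite{ld} as background, so the only thing to compare against is the technique the paper uses for its own generalizations (Lemmas 3.4 and 3.5, which extend exactly this statement to homogeneous differential polynomials). Your route is genuinely different. You reduce everything to the second fundamental theorem for the three small targets $0,a,\infty$ of $\psi$, whereas the paper (following Lahiri--Dewan) never invokes the three-small-functions theorem: it starts from the Milloux-type identity
$$\frac{1}{f^{d}}=\frac{bP}{f^{d}}-\frac{(bP)'}{bP}\cdot\frac{bP}{f^{d}}\cdot\frac{bP-1}{(bP)'},$$
applies the lemma on the logarithmic derivative, and then uses the counting-function identity of Lemma 3.1 to produce $\overline{N}(r,\infty;f)+N\left(r,\frac{1}{bP-1}\right)-N\left(r,\frac{1}{(bP)'}\right)$ directly. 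The trade-off is real: your approach is shorter and conceptually cleaner, and the reduced term $\overline{N}(r,a;\psi)$ falls out of the SMT for free; but it discards the auxiliary term $-N_{0}\left(r,\frac{1}{(P[f])'}\right)$, which the paper's version retains and then exploits (in Lemma 3.5 and in the proof of Theorem 4.1, where the surviving $-N\left(r,\frac{1}{(bP[f])'}\right)$ absorbs the zeros of $f^{(k)}$ via Yamanoi's inequality). So your argument proves Theorem C but would not generalize as written to the paper's main theorems. Both the shared first half (the identity $\frac{1}{f^{n+p}}=\frac{\alpha}{\psi}(f^{(k)}/f)^{p}$ plus the first fundamental theorem) and your local bookkeeping are sound; the one imprecision is the claim that $\psi$ vanishes to order \emph{exactly} $(n+p)m-p\min(m,k)$ at a zero of $f$ of multiplicity $m$ --- when $m<k$ the derivative $f^{(k)}$ may still vanish there, so this is only a lower bound --- but since that multiplicity enters your left-hand side with a negative sign, any excess vanishing only strengthens the inequality, and the proof stands. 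You should also state explicitly that you are using the three-small-functions form of the second main theorem (Hayman, Theorem 2.5), since $a$ is a small function rather than a constant, and that its hypotheses require $\psi$ to be nonconstant, which is implicit in $a$ being a small function of $\psi$.
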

The following result is an immediate corollary of the above theorem.
\begin{theoD} Let $f$ be a transcendental meromorphic function. Let $l(\geq3)$, $n(\geq1)$, $k(\geq1)$ be positive integers. Then
$$T(r,f)\leq \frac{1}{l-2}\ol{N}\bigg(r,\frac{1}{f^{l}(f^{(k)})^{n}-1}\bigg)+S(r,f).$$
\end{theoD}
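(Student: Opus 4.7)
The plan is to obtain Theorem D as a direct specialization of Theorem C, so the whole argument amounts to choosing the parameters in Theorem C and then absorbing three auxiliary counting functions into $T(r,f)$. The only thing to be cautious about is bookkeeping, since the letter $n$ plays different roles in the two statements.

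First I would apply Theorem C with $\alpha \equiv 1$, with the exponent of $f$ (called $n$ in Theorem C) taken equal to $l$, with the exponent of $f^{(k)}$ (called $p$ in Theorem C) taken equal to $n$, and with the small function $a \equiv 1$. Then $\psi = f^{l}(f^{(k)})^{n}$ and the statement of Theorem C becomes
$$(n+l)\,T(r,f) \leq \ol{N}(r,\infty;f) + \ol{N}(r,0;f) + n\,N_{k}(r,0;f) + \ol{N}\bigg(r,\frac{1}{f^{l}(f^{(k)})^{n}-1}\bigg) + S(r,f).$$

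Next I would estimate the three pole/zero counting terms on the right-hand side by $T(r,f)$, using Nevanlinna's first fundamental theorem: $\ol{N}(r,\infty;f)\leq T(r,f)+O(1)$, $\ol{N}(r,0;f)\leq N(r,0;f)\leq T(r,1/f)=T(r,f)+O(1)$, and $N_{k}(r,0;f)\leq N(r,0;f)\leq T(r,f)+O(1)$. Inserting these three bounds into the inequality above and collecting the $T(r,f)$ terms yields
$$(l-2)\,T(r,f)\leq \ol{N}\bigg(r,\frac{1}{f^{l}(f^{(k)})^{n}-1}\bigg)+S(r,f),$$
and since $l\geq 3$ we have $l-2\geq 1>0$, so division by $l-2$ gives the desired estimate.

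There is no real obstacle here; the result is advertised as \enquote{an immediate corollary}, and both the parameter matching and the three crude bounds by $T(r,f)+O(1)$ are standard. The substantive content lies entirely in Theorem C.
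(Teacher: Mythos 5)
Your derivation is correct and is precisely the argument the paper intends: the paper gives no explicit proof, stating only that Theorem D is an immediate corollary of Theorem C, and your specialization ($\alpha\equiv 1$, the exponent $n$ of Theorem C set to $l$, the exponent $p$ set to $n$, $a\equiv 1$) together with the three bounds $\ol{N}(r,\infty;f),\ \ol{N}(r,0;f),\ N_{k}(r,0;f)\leq T(r,f)+O(1)$ is exactly that corollary. The parameter bookkeeping and the use of $l\geq 3$ to ensure $l-2>0$ are both handled correctly, so nothing further is needed.
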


Since $f^{l}(f^{(k)})$ is a specific form of a differential monomial, so the following questions are natural:
\begin{question}(\cite{ch}) Does there exist  positive constants $B_{1}(>0)$, $B_{2}(>0)$ such that the following two inequalities hold?
\begin{enumerate}
\item [i)] $T(r,f)\leq B_{1}~ N\bigg(r,\frac{1}{M[f]-c}\bigg)+S(r,f),$
\item [ii)] $T(r,f)\leq B_{2}~\ol{N}\bigg(r,\frac{1}{M[f]-c}\bigg)+S(r,f),$
\end{enumerate}
where $M[f]$ is any differential monomial as defined above generated by a transcendental meromorphic function $f$ and $c$ is any non zero constant.
\end{question}
In connection to the above questions, the following theorems are given in (\cite{ch}).
\begin{theoE}(\cite{ch}) Let $f$ be a transcendental meromorphic function defined in $\mathbb{C}$, and $M[f]=a (f)^{q_{0}}(f')^{q_{1}}...(f^{(k)})^{q_{k}}$ be a differential monomial generated by $f$, where $a$ be a non zero complex constant; and $k(\geq2)$, $q_{0}(\geq2)$, $q_{i}(\geq0)~(i=1,2,..,k-1)$, $q_{k}(\geq2)$ be integers. Let $\mu=q_{0}+q_{1}+...+q_{k}$ and $\mu_{*}=q_{1}+2q_{2}+...+kq_{k}$. Then
\bea\label{eq0.1} T(r,f)\leq \frac{1}{q_{0}-1}N\bigg(r,\frac{1}{M[f]-1}\bigg)+S^{*}(r,f),\eea
where $S^{*}(r,f)=o(T(r,f))$ as $r\to\infty,r\not\in E$, $E$ is a set of logarithmic density $0$.
\end{theoE}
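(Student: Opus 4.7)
\medskip\noindent\emph{Proof plan.} My strategy is to apply Nevanlinna's second fundamental theorem (SFT) to the auxiliary transcendental meromorphic function $\psi:=M[f]$ at the three target values $0,1,\infty$, and then to read off the claimed inequality by careful accounting of the counting functions that appear. The improvement from $S(r,f)$ to $S^{*}(r,f)$ will come from invoking a refined SFT (of the Hinkkanen type) whose exceptional set has logarithmic density zero.

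\medskip\noindent\textbf{Step 1 (transfer of characteristic).} Writing
$\psi/f^{\mu}=a\prod_{j=1}^{k}(f^{(j)}/f)^{q_{j}}$,
the logarithmic derivative lemma gives $m(r,\psi/f^{\mu})=S(r,f)$. Combined with the direct pole computation $N(r,\psi)=\mu N(r,f)+\mu_{*}\overline{N}(r,f)$, this yields
$T(r,\psi)=\mu\,T(r,f)+\mu_{*}\overline{N}(r,f)+S(r,f)$,
so in particular $S^{*}(r,\psi)=S^{*}(r,f)$.

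\medskip\noindent\textbf{Step 2 (refined SFT).} I would then apply the refined second fundamental theorem to $\psi$ at $0,1,\infty$ to obtain
$T(r,\psi)\le\overline{N}(r,\psi)+\overline{N}(r,1/\psi)+\overline{N}(r,1/(\psi-1))+S^{*}(r,f)$.

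\medskip\noindent\textbf{Step 3 (counting function estimates).} Since poles of $\psi$ occur only at poles of $f$, one has $\overline{N}(r,\psi)\le\overline{N}(r,f)$. Zeros of $\psi$ split into zeros of $f$ and ``extra'' zeros of some $f^{(j)}$, $1\le j\le k$, which are not zeros of $f$. Writing $1/f^{(j)}=(f/f^{(j)})(1/f)$ and invoking the logarithmic derivative lemma, these extra-zero contributions $\overline{N}_{0}(r,1/f^{(j)})$ are controlled so that, after bookkeeping,
$\overline{N}(r,1/\psi)\le \overline{N}(r,1/f)+(\mu-q_{0})T(r,f)+S^{*}(r,f)$.

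\medskip\noindent\textbf{Step 4 (rearrangement).} Substituting the identity of Step~1 into the SFT inequality and using Step~3, one gets
$\mu T(r,f)+\mu_{*}\overline{N}(r,f)\le \overline{N}(r,f)+\overline{N}(r,1/f)+(\mu-q_{0})T(r,f)+\overline{N}(r,1/(\psi-1))+S^{*}(r,f)$.
The coefficient $\mu_{*}-1\ge kq_{k}-1\ge 3$ (here the hypotheses $k\ge 2$, $q_{k}\ge 2$ are used) allows one to drop the $\overline{N}(r,f)$ term on the right, and $\overline{N}(r,1/f)\le T(r,f)+S(r,f)$ lets one collapse the remaining quantities. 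What survives on the left is exactly $(q_{0}-1)T(r,f)$, giving
$(q_{0}-1)T(r,f)\le \overline{N}(r,1/(\psi-1))+S^{*}(r,f)\le N(r,1/(M[f]-1))+S^{*}(r,f)$,
which is the stated inequality.

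\medskip\noindent\textbf{Main obstacle.} The delicate point is Step~3, the precise bookkeeping for the extra-zero contributions $\overline{N}_{0}(r,1/f^{(j)})$ for intermediate $j$. Naive bounds via $T(r,f^{(j)})\le (j+1)T(r,f)+S(r,f)$ are too coarse and inflate the right-hand coefficient well beyond $\mu-q_{0}$. One must exploit the differential-monomial structure, combining the logarithmic derivative lemma with the explicit vanishing orders of each $f^{(j)}$ at common zeros of $f$, so that only the genuinely ``new'' zeros are counted. The combined hypotheses $q_{0}\ge 2$ and $q_{k}\ge 2$ are essential: without both, the transfer in Step~1 does not pick up enough $\mu_{*}\overline{N}(r,f)$ to absorb $\overline{N}(r,f)$ in Step~4, and the neat coefficient $q_{0}-1$ is lost.
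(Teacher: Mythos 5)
Your plan diverges from the paper's proof, which never applies the second fundamental theorem to $\psi=M[f]$ at all: instead it estimates $m(r,1/f^{\mu})$ directly via a Milloux-type identity (Lemma \ref{lem4}) and then disposes of the residual pole term by Yamanoi's theorem. As written, your route has two genuine gaps.

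First, the equality in Step 1, $T(r,\psi)=\mu T(r,f)+\mu_{*}\overline{N}(r,f)+S(r,f)$, is false in general. The lemma on the logarithmic derivative gives $m(r,\psi/f^{\mu})=S(r,f)$ and hence only the upper bound $T(r,\psi)\le \mu T(r,f)+\mu_{*}\overline{N}(r,f)+S(r,f)$. The reverse inequality would require $m(r,f^{\mu}/\psi)=m\left(r,\prod_{j\ge 1}(f/f^{(j)})^{q_{j}}\right)=S(r,f)$, and $m(r,f/f^{(j)})$ is not a small quantity; by the first fundamental theorem it is only bounded by $\overline{N}(r,f)+\overline{N}(r,1/f^{(j)})+S(r,f)$. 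Step 4 uses precisely the unavailable lower-bound direction when you replace $T(r,\psi)$ by $\mu T(r,f)+\mu_{*}\overline{N}(r,f)$ on the left of the SFT inequality. That lower bound is where the real content of the theorem hides, and it is exactly what the paper's Lemma \ref{lem4} supplies in the correct one-sided form, namely $d(P)\,m(r,1/f)\le m\bigl(r,(bP[f]-1)/(bP[f])'\bigr)+S(r,f)$ followed by Lemma \ref{lem1}.

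Second, and more fundamentally, your accounting has no mechanism to produce the coefficient $q_{0}-1$. Even after repairing Steps 1--3, the terms $\overline{N}(r,\psi)\le\overline{N}(r,f)$ and the extra zeros of the intermediate derivatives only lead to an estimate of Theorem F type, with a coefficient governed by $\mu-\mu_{*}$. The paper reaches $q_{0}-1$ by combining the multiplicity count $(q_{0}-1)N(r,0;f)+(q_{k}-1)N(r,0;f^{(k)})\le N\bigl(r,0;(M[f])'\bigr)$ with Yamanoi's theorem (Lemma \ref{lem1.1}), $(k-1)\overline{N}(r,f)\le N(r,1/f^{(k)})+S^{*}(r,f)$; it is this deep input that absorbs $\overline{N}(r,\infty;f)$ using $(k-1)(q_{k}-1)\ge 1$ (here is where $k\ge 2$ and $q_{k}\ge 2$ actually enter), and it is also the true source of the exceptional set of logarithmic density zero. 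Your attribution of $S^{*}$ to a \enquote{refined SFT of Hinkkanen type} is a misdiagnosis: the exceptional set in the standard SFT is already of finite linear measure and is not the obstruction. Without Yamanoi's theorem, or an equivalent estimate on the zeros of $f^{(k)}$, the stated inequality is out of reach by the outlined method.
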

Thus Theorem E improves, extends and generalizes Theorem D.
\begin{theoF}(\cite{ch}) Let $f$ be a transcendental meromorphic function defined in $\mathbb{C}$, and $M[f]=a (f)^{q_{0}}(f')^{q_{1}}...(f^{(k)})^{q_{k}}$ be a differential monomial generated by $f$, where $a$ be a non zero complex constant; and $k(\geq1)$, $q_{0}(\geq1)$, $q_{i}(\geq0)~(i=1,2,..,k-1)$, $q_{k}(\geq1)$ be integers. Let $\mu=q_{0}+q_{1}+...+q_{k}$ and $\mu_{*}=q_{1}+2q_{2}+...+kq_{k}$. If $\mu-\mu_{*}\geq3$, then
\bea\label{eq0.3} T(r,f)\leq \frac{1}{\mu-\mu^{*}-2}\ol{N}\bigg(r,\frac{1}{M[f]-1}\bigg)+S(r,f),\eea
where $S(r,f)=o(T(r,f))$ as $r\to\infty,r\not\in E$, $E$ is a set of finite linear measure.
\end{theoF}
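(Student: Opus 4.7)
The plan is to apply Nevanlinna's Second Fundamental Theorem to $\psi:=M[f]$, combined with a pointwise multiplicity count at the zeros of $f$, so as to recover the coefficient $\mu-\mu_{*}-2$ in front of $T(r,f)$.

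First, I would use the identity $\psi/f^{\mu}=a\prod_{i=1}^{k}(f^{(i)}/f)^{q_{i}}$; by the lemma of the logarithmic derivative, this gives $m(r,\psi/f^{\mu})=S(r,f)$, hence $\mu\, m(r,1/f)\leq m(r,1/\psi)+S(r,f)$. Combined with the First Fundamental Theorem, this yields
$$\mu\, T(r,f)-\mu\, N(r,0;f)\leq T(r,\psi)-N(r,0;\psi)+S(r,f).$$
Next, I would apply the Second Fundamental Theorem to $\psi$ with the three target values $0,1,\infty$. Using $\ol{N}(r,\psi)=\ol{N}(r,f)$ and $T(r,\psi)\leq \mu\, T(r,f)+\mu_{*}\ol{N}(r,f)+S(r,f)$ (so that $S(r,\psi)=S(r,f)$), substitution produces
$$\mu\, T(r,f)\leq \mu\, N(r,0;f)+\ol{N}(r,f)+[\ol{N}(r,0;\psi)-N(r,0;\psi)]+\ol{N}\bigg(r,\frac{1}{\psi-1}\bigg)+S(r,f).$$

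The heart of the argument is to estimate $\mu\, N(r,0;f)+\ol{N}(r,0;\psi)-N(r,0;\psi)$ pointwise. At a zero of $f$ at $z_{0}$ of multiplicity $p\geq 1$, the corresponding zero of $\psi$ has multiplicity at least $\sum_{i=0}^{k}q_{i}\max(p-i,0)$, because $f^{(i)}$ vanishes to order at least $p-i$ whenever $p\geq i$. A short computation then yields
$$\mu p+1-\sum_{i=0}^{k}q_{i}\max(p-i,0)=1+\sum_{i=0}^{k}q_{i}\min(p,i)\leq 1+\mu_{*},$$
the last inequality using $\min(p,i)\leq i$. The zeros of $\psi$ that are not zeros of $f$ only make $\ol{N}(r,0;\psi)-N(r,0;\psi)$ more negative, so they can be discarded. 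Summing over all zeros of $f$ then yields
$$\mu\, N(r,0;f)+\ol{N}(r,0;\psi)-N(r,0;\psi)\leq (\mu_{*}+1)\,\ol{N}(r,0;f).$$

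Substituting back and using the trivial bounds $\ol{N}(r,0;f),\ol{N}(r,f)\leq T(r,f)+O(1)$, the inequality reduces to
$$(\mu-\mu_{*}-2)\,T(r,f)\leq \ol{N}\bigg(r,\frac{1}{\psi-1}\bigg)+S(r,f),$$
and dividing by the positive integer $\mu-\mu_{*}-2$ delivers the theorem. The main obstacle is the pointwise multiplicity count for zeros of $f$ of small multiplicity $1\leq p<k$, where the higher derivatives $f^{(i)}$ with $i\geq p$ need not vanish at $z_{0}$; the compact formula $1+\sum_{i}q_{i}\min(p,i)$ handles all cases uniformly and gives the bound $\mu_{*}+1$, which is tight as soon as $p\geq k$.
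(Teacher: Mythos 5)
Your proof is correct, and it reaches the stated inequality by a route that differs in its technical execution from the one in this paper (where Theorem F is recovered as the monomial case of Theorem \ref{th2}). The paper does not invoke the second fundamental theorem for $\psi=M[f]$ directly; instead it reproves the relevant Milloux-type estimate by hand in Lemma \ref{lem4}, starting from the identity $\frac{1}{f^{d(P)}}=\frac{bP[f]}{f^{d(P)}}-\frac{(bP[f])'}{bP[f]}\cdot\frac{bP[f]}{f^{d(P)}}\cdot\frac{bP[f]-1}{(bP[f])'}$ together with the lemma of the logarithmic derivative and the counting identity of Lemma \ref{lem1}, and it then absorbs the zeros of $f$ by counting, in Lemma \ref{lem5}, the zeros of $(P[f])'$ that they generate, with a case split according to whether the multiplicity $q$ satisfies $q\le k$ or $q\ge k+1$. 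You instead apply the second fundamental theorem to $\psi$ at $0,1,\infty$, pass the deficiency of $1/f$ to $\psi$ via $m(r,\psi/f^{\mu})=S(r,f)$, and control the ramification term $\ol{N}(r,0;\psi)-N(r,0;\psi)$ at zeros of $f$ by the uniform formula $\mu p+1-\sum_i q_i\max(p-i,0)=1+\sum_i q_i\min(p,i)\le 1+\mu_{*}$, which elegantly replaces the paper's case analysis. Both arguments rest on the same two pillars (a second-main-theorem mechanism plus a pointwise multiplicity count at zeros of $f$), so the gain is mostly one of economy: your version avoids Lemma \ref{lem1} and the auxiliary counting functions $N_{\star}$, $N_{0}$, and handles all multiplicities $p$ at once. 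Two small points to keep in mind: you should note that $\psi$ is non-constant (otherwise the second fundamental theorem does not apply; this follows as in Lemma \ref{lem2} since $q_0\ge1$), and your step $\ol{N}(r,\psi)=\ol{N}(r,f)$ uses that the coefficient $a$ is a constant --- in the paper's more general setting of small-function coefficients one only gets $\ol{N}(r,\psi)\le\ol{N}(r,f)+S(r,f)$, which still suffices.
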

\begin{theoG}(\cite{ch}) Let $f$ be a transcendental meromorphic function defined in $\mathbb{C}$, and $M[f]=a (f)^{q_{0}}(f')^{q_{1}}...(f^{(k)})^{q_{k}}$ be a differential monomial generated by $f$, where $a$ be a non zero complex constant; and $k(\geq1)$, $q_{0}(\geq1)$, $q_{i}(\geq0)~(i=1,2,..,k-1)$, $q_{k}(\geq1)$ be integers. Let $\mu=q_{0}+q_{1}+...+q_{k}$ and $\mu_{*}=q_{1}+2q_{2}+...+kq_{k}$. If $\mu-\mu_{*}\geq 5-q_{0}$, then
\bea\label{eq0.5} T(r,f)\leq \frac{1}{\mu-\mu^{*}-4+q_{0}}\ol{N}\bigg(r,\frac{1}{M[f]-1}\bigg)+S(r,f).\eea
\end{theoG}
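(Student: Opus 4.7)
The plan is to apply Nevanlinna's second fundamental theorem to the auxiliary meromorphic function $M[f]$ at the three targets $0$, $1$, and $\infty$, and then translate each counting function on the right-hand side back into data attached to $f$ itself via the multiplicative structure of the monomial. This is the same overall template used in Theorems C--F; the improvement in Theorem G is obtained by sharpening one of the intermediate estimates.

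First I would establish the calibration inequality $T(r,M[f]) \leq \mu\, T(r,f) + \mu_{*}\,\ol{N}(r,\infty;f) + S(r,f)$, which follows from writing $M[f] = a f^{\mu}\prod_{i=1}^{k}(f^{(i)}/f)^{q_{i}}$, applying the logarithmic derivative lemma to each ratio, and observing that a pole of $f$ of order $p$ produces a pole of $M[f]$ of order $p\mu+\mu_{*}$. In particular this yields $S(r,M[f]) = S(r,f)$ together with the identity $\ol{N}(r,\infty;M[f]) = \ol{N}(r,\infty;f)$. Applying the second main theorem to $M[f]$ then gives
$$T(r,M[f]) \leq \ol{N}(r,0;M[f]) + \ol{N}(r,\infty;f) + \ol{N}\left(r,\frac{1}{M[f]-1}\right) + S(r,f),$$
and one splits $\ol{N}(r,0;M[f])$ into zeros of $M[f]$ coming from zeros of $f$, bounded by $\ol{N}(r,0;f)$, and zeros of some $f^{(i)}$ with $q_{i}\geq 1$ that are not zeros of $f$. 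The latter contribution is controlled by a Milloux-type argument in the spirit of the proof of Theorem C, ultimately reducing to $\ol{N}(r,\infty;f)$ plus a term of order $S(r,f)$.

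Combining these bounds and rearranging should furnish the one-sided estimate
$$(\mu-\mu_{*}-4+q_{0})\, T(r,f) \leq \ol{N}\left(r,\frac{1}{M[f]-1}\right) + S(r,f),$$
from which the theorem follows by dividing by the coefficient, which is at least $1$ by hypothesis. The main obstacle will be the careful bookkeeping of these coefficients: the $+q_{0}$ improvement over Theorem F must come from exploiting the fact that at each zero of $f$ of order $q$ the monomial $M[f]$ vanishes to order at least $qq_{0}\geq q_{0}$, so that in passing from $N(r,0;M[f])$ to its reduced version $\ol{N}(r,0;M[f])$ one saves a factor proportional to $q_{0}$. Aligning this saving with the standard second-main-theorem deficit so that precisely the constant $\mu-\mu_{*}-4+q_{0}$ appears --- with the hypothesis $\mu-\mu_{*}\geq 5-q_{0}$ being exactly what guarantees positivity --- is where the arithmetic must be done delicately.
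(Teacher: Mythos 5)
Your outline does not close logically, because the two inequalities you propose to combine both bound $T(r,M[f])$ from \emph{above}. The second fundamental theorem gives $T(r,M[f])\leq \ol{N}(r,0;M[f])+\ol{N}(r,\infty;M[f])+\ol{N}\left(r,\frac{1}{M[f]-1}\right)+S(r,f)$, and your \enquote{calibration inequality} gives $T(r,M[f])\leq \mu T(r,f)+\mu_{*}\ol{N}(r,\infty;f)+S(r,f)$; two upper bounds for the same quantity cannot be \enquote{combined and rearranged} into a lower bound for $\ol{N}\left(r,\frac{1}{M[f]-1}\right)$ in terms of $T(r,f)$. The step that is actually needed, and that is missing from your sketch, is the inequality in the opposite direction,
$$\mu\, m\left(r,\frac{1}{f}\right)\;\leq\; m\left(r,\frac{1}{M[f]}\right)+m\left(r,\frac{M[f]}{f^{\mu}}\right)+O(1)\;\leq\; T(r,M[f])-N\left(r,\frac{1}{M[f]}\right)+S(r,f),$$
which comes from the lemma on the logarithmic derivative applied to $M[f]/f^{\mu}$; only after adding $\mu N\left(r,\frac{1}{f}\right)$ and cancelling against $N(r,0;M[f])$ does $\mu T(r,f)$ appear on the left-hand side. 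This is precisely the content of Lemma \ref{lem4} of the present paper, obtained there from the identity $\frac{1}{f^{d(P)}}=\frac{bP[f]}{f^{d(P)}}-\frac{(bP[f])'}{bP[f]}\cdot\frac{bP[f]}{f^{d(P)}}\cdot\frac{bP[f]-1}{(bP[f])'}$ together with the counting-function identity of Lemma \ref{lem1}; without it the argument cannot start.

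Second, the constant $\mu-\mu_{*}-4+q_{0}$ is not reachable by the route you describe. You propose to control the zeros of $M[f]$ that come from zeros of the derivatives $f^{(i)}$ rather than of $f$ by a \enquote{Milloux-type argument} reducing them to $\ol{N}(r,\infty;f)+S(r,f)$; no such general reduction exists (for $k\geq 2$ the needed statement is essentially Yamanoi's theorem, which is what Theorem E uses, but only at the cost of an $S^{*}(r,f)$ error term outside a set of logarithmic density zero --- not the $S(r,f)$ error claimed in Theorem G). What actually produces the $+q_{0}$ and the $-4$ is the ramification term $-N_{0}\left(r,\frac{1}{(M[f])'}\right)$, entirely absent from your sketch, combined with a multiplicity count at the zeros of $f$: a zero of $f$ of order $q\leq k$ is a zero of $(M[f])'$ of order at least $qq_{0}-1$, while one of order $q\geq k+1$ is a zero of order at least $q\mu-\mu_{*}-1$, so that $\mu N(r,0;f)-N_{\star}\left(r,\frac{1}{(M[f])'}\right)$ is absorbed by $\ol{N}(r,0;f)+\nu\ol{N}_{(k+1}\left(r,\frac{1}{f}\right)+(\mu-q_{0})N_{k)}\left(r,\frac{1}{f}\right)$ (Cases 1 and 2 in the proof of Lemma \ref{lem5}). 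Your heuristic about passing from $N(r,0;M[f])$ to $\ol{N}(r,0;M[f])$ does not capture this: the saving comes from the zeros of the \emph{derivative} $(M[f])'$, not from the reduced counting function of $M[f]$ itself, and the delicate arithmetic you defer is exactly where the theorem lives.
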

The aim of this paper is to extend \enquote{Theorem D - Theorem G} for homogeneous differential polynomials.
%%%%%%%%%%%%%%%%%%%%%%%%%%%%%%%%%%%%%%%%%%%%%%%%%%%%%%%%%%%%%%%%%%%%%%%%%%%%%%%%%%%%%%%%%%%%%%%%%%%%%%%%%%%%%%%%%%%%%%%%%%%%%%%%%%%%%%%%%%%%%%%%%%%%%%%%%%
\section{Main Results}
\begin{theo}\label{th1} Let $f$ be a transcendental meromorphic function and $P[f]$ be a homogeneous differential polynomial generated by $f$. Let $k(\geq2)$ be the highest order of the derivative of $f$ in $P[f]$. If $q_{0j}(\geq2)$, $q_{ij}(\geq0)$ $(i=1,2,\ldots,k-1)$, $q_{kj}(\geq2)$ be integers for $1\leq j \leq t$, then either
\begin{enumerate}
\item [i)] $P[f]\equiv0$, or
\item [ii)] $T(r,f) \leq \frac{1}{q_{0}-1}N\left(r,\frac{1}{P[f]-1}\right)+S^{*}(r,f),$
\end{enumerate}
where $S^{*}(r,f)=o(T(r,f))$ as $r\to\infty$ and $r\not\in E$, $E$ is a set of logarithmic density $0$.
\end{theo}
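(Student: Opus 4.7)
The plan is to adapt the technique used to prove Theorem E (the monomial case) to the homogeneous-polynomial setting: I will exploit the homogeneity of $P[f]$ to reduce the problem to one that mirrors the single-monomial situation, and use the uniform hypotheses $q_{0j}\geq 2$ and $q_{kj}\geq 2$ to force enough zero and pole multiplicity for $P[f]$ to extract the factor $1/(q_{0}-1)$, where I set $q_{0}=\min_{j}q_{0j}\geq 2$.

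Assume $P[f]\not\equiv 0$. Because every monomial $M_{j}[f]$ has the same total degree $d=d(P)$, I would first write
$$P[f]=f^{d}\cdot\Phi[f],\qquad \Phi[f]:=\sum_{j=1}^{t}b_{j}\prod_{i=0}^{k}\bigg(\frac{f^{(i)}}{f}\bigg)^{q_{ij}},$$
so that by the lemma on logarithmic derivatives $m(r,\Phi[f])=S(r,f)$ and the only poles of $\Phi[f]$ lie at zeros and poles of $f$ with multiplicity bounded by $\nu$. This gives the two-sided estimate relating $d\,T(r,f)$ to $T(r,P[f])$ modulo $O(T(r,\Phi[f]))$, in which $T(r,\Phi[f])$ is controlled by $\ol{N}(r,f)+\ol{N}(r,1/f)$ plus $S(r,f)$. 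A local multiplicity count then shows that at a zero of $f$ of order $p\geq k$, every $M_{j}[f]$ vanishes to order at least $q_{0j}\,p\geq q_{0}\,p$, while at a pole of $f$ of order $p$, every $M_{j}[f]$ has a pole of order at least $dp$.

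Next I would apply the second fundamental theorem to $P[f]$ at the three values $0,1,\infty$, yielding
$$T(r,P[f])\leq\ol{N}(r,P[f])+\ol{N}(r,1/P[f])+\ol{N}\bigg(r,\frac{1}{P[f]-1}\bigg)+S(r,P[f]).$$
The strategy is to weaken the last term from $\ol{N}$ to $N$ (trivially $\ol{N}\leq N$), to replace the first two $\ol{N}$ terms by the reduced counts of distinct zeros and poles of $f$, and then to absorb the difference $N(r,1/P[f])-\ol{N}(r,1/P[f])$ into the lower bound $T(r,P[f])\geq d\,T(r,f)-S(r,f)$ by invoking the multiplicity count of Step~1, which produces a gain of factor $q_{0}-1$ on the left. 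After rearranging, this yields $(q_{0}-1)\,T(r,f)\leq N(r,1/(P[f]-1))+S^{*}(r,f)$, which is (ii). The main obstacle I foresee is handling cancellations among the leading Laurent coefficients of the different monomials $M_{j}[f]$ at a common zero or pole of $f$: generically no such cancellation occurs, and when it does the multiplicity of $P[f]$ only exceeds the prediction (which is harmless in the inequality) unless cancellation persists globally, in which case $P[f]\equiv 0$ and we are in case (i). Finally, the refinement from the standard error $S(r,f)$ to $S^{*}(r,f)$, with exceptional set of logarithmic density zero rather than finite linear measure, is expected to arise from applying a sharper form of the logarithmic derivative estimate (in the Milloux--Hayman spirit) during the control of $T(r,\Phi[f])$.
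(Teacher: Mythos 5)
Your proposal has a genuine gap: the central mechanism you choose (the second fundamental theorem applied to $P[f]$ at $0,1,\infty$, combined with multiplicity bookkeeping at zeros and poles of $f$) cannot reach the stated conclusion. After the SFT you are left with the term $\ol{N}(r,P[f])=\ol{N}(r,\infty;f)+S(r,f)$ on the right-hand side, and nothing in the hypotheses of Theorem \ref{th1} controls the poles of $f$ (there is no condition of the form $d(P)>\nu+2$ as in Theorems \ref{th2}--\ref{th3}; e.g.\ $P[f]=f^{2}(f^{(5)})^{2}$ is admissible with $d(P)=4$, $\nu=10$). Your passage from $T(r,P[f])$ to $d\,T(r,f)$ via $\Phi[f]=P[f]/f^{d}$ likewise costs terms of size $\nu\,(\ol{N}(r,f)+\ol{N}(r,1/f))$, which again cannot be absorbed. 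The paper avoids this entirely: it uses a Milloux-type inequality (Lemma \ref{lem4}) which keeps the \emph{negative} term $-N\left(r,\frac{1}{(P[f])'}\right)$, observes that
$$(q_{0}-1)N(r,0;f)+(q_{k}-1)N(r,0;f^{(k)})\leq N(r,0;(P[f])'),$$
and then --- this is the decisive step you are missing --- invokes Yamanoi's theorem (Lemma \ref{lem1.1}), $(k-1)\ol{N}(r,f)\leq N(r,1/f^{(k)})+S^{*}(r,f)$, so that the surviving $+\ol{N}(r,\infty;f)$ is dominated by $(k-1)(q_{k}-1)\ol{N}(r,\infty;f)$ coming from the zeros of $f^{(k)}$ inside $(P[f])'$; this is precisely where the hypotheses $k\geq 2$ and $q_{kj}\geq 2$ are used.

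A secondary but related error: you attribute the refined error term $S^{*}(r,f)$ (exceptional set of logarithmic density zero) to a sharpened lemma on the logarithmic derivative. In fact it enters only through Yamanoi's theorem; every other estimate in the argument carries the ordinary $S(r,f)$ with an exceptional set of finite linear measure. Your remarks on cancellation among leading Laurent coefficients are also beside the point: at a common zero or pole of $f$ the order of vanishing (or of the pole) of the sum $P[f]$ is at least the minimum over the monomials, so cancellation can only help the inequalities; the dichotomy with case (i) is not needed there, but only to ensure $P[f]$ is not a nonzero constant (Lemma \ref{lem2}) so that $(P[f])'\not\equiv 0$.
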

\begin{exm} Let us take $f(z)=e^{z}$ and $$P[f]=f^{2}\left(f'(f'')^{2}(f^{(3)})^{2}-(f')^{2}f''(f^{(3)})^{2}\right).$$ Then $P[f]\equiv 0$. Thus the condition (i) in Theorem \ref{th1} is necessary.
\end{exm}
%\begin{rem} Thus the Theorem \ref{th1} extends and generalizes Theorem D.
%\end{rem}
\begin{rem} If $M[f]$ is differential monomial and $f$ is a transcendental meromorphic function, then $M[f]\not\equiv 0$.  Thus for the case of a differential monomial, the condition (i) in Theorem \ref{th1} is redundant.
\end{rem}
%\begin{rem} Under the suppositions of Theorem \ref{th1}, it is clear that $P[f]$ is either identically zero or transcendental.
%\end{rem}
\begin{rem} Under the suppositions of Theorem \ref{th1}, it is clear that either $P[f]$ is identically zero or the equation $P[f]=1$ has infinitely many solutions.
\end{rem}
\begin{theo}\label{th2} Let $f$ be a transcendental meromorphic function. Also, let $P[f]$ be a homogeneous differential polynomial generated by $f$, and of degree $d(P)$. Let $k(\geq 1)$ be the highest order of the derivative of $f$ in $P[f]$ and $q_{0j}(\geq1)$, $q_{ij}(\geq0)$ $(i=1,2,\ldots,k-1)$, $q_{kj}(\geq1)$ be integers for $1\leq j\leq t$. If $d(P)-\nu>2$, then either
\begin{enumerate}
\item [i)]  $P[f]\equiv 0$ or,
\item [ii)] $T(r,f) \leq \frac{1}{d(P)-\nu-2}\ol{N}(r,\frac{1}{P[f]-1})+S(r,f),$
\end{enumerate}
where $S(r,f)=o(T(r,f))$ as $r\to\infty$ and $r\not\in E$, $E$ is a set of finite linear measure.
\end{theo}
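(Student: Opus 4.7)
My plan is to adapt the single-monomial proof of Theorem F (from \cite{ch}) to the homogeneous-polynomial setting, using homogeneity to produce a clean factorisation that reduces everything to bookkeeping of logarithmic derivatives. Assume $P[f]\not\equiv 0$. Since $\sum_i q_{ij}=d(P)$ for every $j$, the polynomial factors as $P[f]=f^{d(P)}G$, where
$$
G=\sum_{j=1}^{t}b_{j}\prod_{i=1}^{k}\left(\frac{f^{(i)}}{f}\right)^{q_{ij}}.
$$
By the lemma on the logarithmic derivative, $m(r,G)=S(r,f)$, and a direct pole-order calculation at zeros and poles of $f$ gives $N(r,G)\leq\nu\!\left(\overline{N}(r,0;f)+\overline{N}(r,\infty;f)\right)$, hence the same bound for $T(r,G)$ modulo $S(r,f)$.

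Next, I would combine the First Main Theorem applied to $f^{d(P)}=P[f]/G$, which yields
$$
d(P)\,T(r,f)=T(r,f^{d(P)})\leq T(r,P[f])+T(r,G)+O(1),
$$
with Nevanlinna's Second Main Theorem applied to $P[f]$ at the three target values $0,1,\infty$:
$$
T(r,P[f])\leq\overline{N}(r,0;P[f])+\overline{N}(r,\infty;P[f])+\overline{N}\!\left(r,\tfrac{1}{P[f]-1}\right)+S(r,f).
$$
Since poles of $P[f]$ occur only at poles of $f$, one has $\overline{N}(r,\infty;P[f])\leq\overline{N}(r,\infty;f)$. The remaining task is to control $\overline{N}(r,0;P[f])$ by $\overline{N}(r,0;f)$, plus at worst a multiple of $\overline{N}(r,\infty;f)$ already paid for by the weight $\nu$ and an $S(r,f)$ error, exploiting the hypotheses $q_{0j}\geq 1$ and $q_{kj}\geq 1$ so that every zero of $P[f]$ either lies at a zero of $f$ or is absorbed into the weight contribution of $G$. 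Substituting back and using $\overline{N}(r,0;f)+\overline{N}(r,\infty;f)\leq 2T(r,f)+S(r,f)$, one arrives at
$$
(d(P)-\nu-2)\,T(r,f)\leq\overline{N}\!\left(r,\tfrac{1}{P[f]-1}\right)+S(r,f),
$$
from which conclusion (ii) follows immediately once the hypothesis $d(P)-\nu>2$ is invoked.

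The principal obstacle will be the sharp bookkeeping of $\overline{N}(r,0;P[f])$. A naive estimate $\overline{N}(r,0;P[f])\leq\overline{N}(r,0;f)+\overline{N}(r,0;G)$ is too wasteful, because $\overline{N}(r,0;G)$ could be as large as $T(r,G)$ and would inflate the constant on $T(r,f)$ well beyond $d(P)-\nu-2$. To avoid this, I would invoke a Wronskian- or Milloux-type identity of the kind underlying Theorem C, showing that zeros of $P[f]$ lying away from zeros of $f$ contribute only an $S(r,f)$ error after the pole contribution has been accounted for by the weight parameter $\nu$. Handling the excluded case $P[f]\equiv 0$ is trivial but must be flagged, and care is needed to ensure the exceptional set in $S(r,f)$ remains of finite linear measure (rather than merely logarithmic density $0$), in contrast to Theorem~\ref{th1}.
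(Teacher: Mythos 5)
Your opening moves are sound and in fact parallel the paper's own setup: the factorisation $P[f]=f^{d(P)}G$ with $m(r,G)=S(r,f)$ and $N(r,G)\leq\nu\left(\overline{N}(r,0;f)+\overline{N}(r,\infty;f)\right)+S(r,f)$ is exactly why the lemma on the logarithmic derivative applies to $b(z)P[f]/f^{d(P)}$. But the architecture you build on top of it --- the Second Main Theorem for $P[f]$ at $0,1,\infty$ --- has a gap that cannot be patched the way you propose. The claim that zeros of $P[f]$ lying away from zeros of $f$ contribute only $S(r,f)$ is false: such points are precisely the zeros of $G$ at ordinary points of $f$, and already for $P[f]=f^{d-1}f'$ these are the zeros of $f'$ outside the zero set of $f$, whose counting function is in general comparable to $T(r,f)$, not $S(r,f)$. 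No Wronskian identity removes them, because they genuinely are zeros of $P[f]$ and the Second Main Theorem charges you $\overline{N}(r,0;P[f])$ for each of them. Moreover, even if that claim were granted, your route pays $\nu\left(\overline{N}(r,0;f)+\overline{N}(r,\infty;f)\right)\leq 2\nu T(r,f)$ for $T(r,G)$ and an additional $\overline{N}(r,0;f)+\overline{N}(r,\infty;f)\leq 2T(r,f)$ from the Second Main Theorem, so the best constant it can produce is $1/(d(P)-2\nu-2)$, not the asserted $1/(d(P)-\nu-2)$.

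The idea you are missing is that the paper never forms $\overline{N}(r,0;P[f])$ at all. It uses the Milloux-type identity of Lemma \ref{lem4}, namely $\frac{1}{f^{d(P)}}=\frac{bP[f]}{f^{d(P)}}-\frac{(bP[f])'}{bP[f]}\cdot\frac{bP[f]}{f^{d(P)}}\cdot\frac{bP[f]-1}{(bP[f])'}$, to obtain $d(P)m(r,1/f)\leq m\left(r,\frac{bP[f]-1}{(bP[f])'}\right)+S(r,f)$, converts this via the counting-function identity of Lemma \ref{lem1}, and so arrives at an upper bound for $d(P)T(r,f)$ containing the \emph{negative} term $-N\left(r,\frac{1}{(bP[f])'}\right)$. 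The decisive step (Lemma \ref{lem5}) is then a local multiplicity count: a zero of $f$ of multiplicity $q$ is a zero of $(P[f])'$ of multiplicity at least $qq_{\ast}-1$ when $q\leq k$ and at least $qd(P)-\nu-1$ when $q\geq k+1$, so the subtracted term cancels almost all of $d(P)N(r,1/f)$, leaving only $\overline{N}(r,0;f)+\nu\overline{N}_{(k+1}(r,1/f)+(d(P)-q_{\ast})N_{k)}(r,1/f)$, which together with $\overline{N}(r,\infty;f)$ is at most $(\nu+2)T(r,f)+S(r,f)$. That cancellation mechanism, absent from your approach, is what produces the constant $d(P)-\nu-2$; to repair your proof you would need to replace the Second Main Theorem step by this Milloux-type estimate, at which point you are reproducing the paper's argument.
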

\begin{exm} Let us take $f(z)=e^{-z}$ and $$P[f]=f^{6}\left(f'f^{(3)}+f''f^{(3)}\right).$$ Then $P[f]\equiv 0$. Thus the condition (i) in Theorem \ref{th2} is necessary.
\end{exm}
%\begin{rem} Thus the Theorem \ref{th2} extends and generalizes Theorem E.
%\end{rem}
\begin{rem} If $M[f]$ is differential monomial and $f$ is a transcendental meromorphic function, then $M[f]\not\equiv 0$.  Thus for the case of a differential monomial, the condition (i) in Theorem \ref{th2} is redundant.
\end{rem}
%\begin{rem} Under the suppositions of Theorem \ref{th2}, it is clear that $P[f]$ is either identically zero or transcendental.
%\end{rem}
\begin{rem} Under the suppositions of Theorem \ref{th2}, it is clear that either $P[f]$ is identically zero or the equation $P[f]=1$ has infinitely many solutions.
\end{rem}
\begin{theo}\label{th3} Let $f$ be a transcendental meromorphic function. Also, let $P[f]$ be a homogeneous differential polynomial generated by $f$ and of degree $d(P)$. Let $k(\geq 1)$ be the highest order of the derivative of $f$ in $P[f]$ and $q_{0j}(\geq1)$, $q_{ij}(\geq0)$ $(i=1,2,\ldots,k-1)$, $q_{kj}(\geq1)$ be integers for $1\leq j\leq t$. If $d(P)+k q_{\ast}>2(k+1)+\nu$, then either
\begin{enumerate}
\item [i)]  $P[f]\equiv 0$ or,
\item [ii)] $T(r,f) \leq \frac{(k+1)}{d(P)+k q_{\ast}-\nu-2(k+1)}\ol{N}(r,\frac{1}{P[f]-1})+S(r,f),$
\end{enumerate}
where $S(r,f)=o(T(r,f))$ as $r\to\infty$ and $r\not\in E$, $E$ is a set of finite linear measure and $q_{\ast}=\min\limits_{j=1}^{t}\{q_{0j}\}$.
\end{theo}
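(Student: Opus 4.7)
The plan is to mirror the strategy used to prove Theorem~G (the single-monomial case) and lift it to a homogeneous differential polynomial. Assume $P[f]\not\equiv 0$, otherwise conclusion (i) holds.

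First, exploit the homogeneity to write $M_{j}[f]=f^{d(P)}\prod_{i=1}^{k}(f^{(i)}/f)^{q_{ij}}$ for every $j$, so that $P[f]=f^{d(P)}R[f]$ with $R[f]=\sum_{j=1}^{t}b_{j}\prod_{i=1}^{k}(f^{(i)}/f)^{q_{ij}}$. The lemma on the logarithmic derivative gives $m(r,R)=S(r,f)$, and because $f^{(i)}/f$ has a pole of order exactly $i$ at each zero and each pole of $f$, one has $N(r,R)\le \nu\bigl(\ol{N}(r,0;f)+\ol{N}(r,\infty;f)\bigr)$, and hence $T(r,R)\le \nu\bigl(\ol{N}(r,0;f)+\ol{N}(r,\infty;f)\bigr)+S(r,f)$. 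From $f^{d(P)}=P/R$ and the first main theorem,
\[
T(r,P[f])\ge d(P)\,T(r,f)-\nu\bigl(\ol{N}(r,0;f)+\ol{N}(r,\infty;f)\bigr)-S(r,f).
\]

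Next, I apply Nevanlinna's second fundamental theorem to $P[f]$ with the three targets $0,1,\infty$, giving
\[
T(r,P)\le\ol{N}(r,0;P)+\ol{N}(r,1;P)+\ol{N}(r,\infty;P)+S(r,f).
\]
The pole count $\ol{N}(r,\infty;P)\le\ol{N}(r,\infty;f)+S(r,f)$ is immediate from the small-function hypothesis on the $b_{j}$. The key step is to bound $\ol{N}(r,0;P)$: I would adapt the Lahiri--Dewan argument (Theorem~C) to the polynomial setting, exploiting that a zero of $f$ of multiplicity $m$ forces each $M_{j}[f]$ to vanish to order at least $q_{0j}m\ge q_{\ast}m$. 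Combining this multiplicity gain with the truncation inequality $N_{k}(r,0;f)\le k\,\ol{N}(r,0;f)$, I expect the resulting estimate to contribute an extra $kq_{\ast}T(r,f)/(k+1)$ on the \enquote{good} side, which after clearing denominators produces the factor $kq_{\ast}$ in the hypothesis and the factor $k+1$ in the conclusion.

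Substituting the three pieces into the second fundamental theorem inequality, multiplying through by $k+1$ where appropriate, and using $\ol{N}(r,0;f)+\ol{N}(r,\infty;f)\le 2T(r,f)+S(r,f)$ to collect $T(r,f)$-terms, one arrives at
\[
\bigl(d(P)+kq_{\ast}-\nu-2(k+1)\bigr)T(r,f)\le(k+1)\,\ol{N}(r,1;P)+S(r,f),
\]
and the theorem follows on dividing by the coefficient, which is positive by hypothesis. The main obstacle is the third step, the bound on $\ol{N}(r,0;P)$: one must show that the multiplicity gain $q_{\ast}m$ at a zero of $f$ survives under the sum $\sum_{j}b_{j}M_{j}[f]$, i.e.\ that cancellations among the monomials $M_{j}[f]$ cannot erode it. This is where the hypotheses $q_{kj}\ge 1$ and $q_{0j}\ge 1$ for every $j$ become essential, via a careful case analysis of zeros of $f$ split by whether their multiplicity is $\le k$ or $>k$, combined with a truncation at level $k+1$.
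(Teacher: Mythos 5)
There is a genuine gap, and it sits exactly where you flag it: the bound on $\ol{N}(r,0;P[f])$. Your skeleton applies the second fundamental theorem to $P[f]$ at the three targets $0,1,\infty$, so you must control $\ol{N}(r,0;P[f])$. But $P[f]=f^{d(P)}R[f]$ vanishes at many points where $f$ does not: at zeros of the derivatives $f^{(i)}$ and, worse, at zeros of $R[f]$ created by cancellation among the monomials. The only available bound there is $N(r,0;R)\le T(r,R)+O(1)\le\nu\bigl(\ol{N}(r,0;f)+\ol{N}(r,\infty;f)\bigr)+S(r,f)$, i.e.\ up to $2\nu T(r,f)$; feeding that into your inequality produces a coefficient in which $\nu$ enters with the wrong sign and magnitude, not the stated $d(P)+kq_{\ast}-\nu-2(k+1)$, and no factor $k+1$. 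The multiplicity gain $q_{0j}m\ge q_{\ast}m$ at a zero of $f$ says nothing about these other zeros of $P[f]$, so the hypotheses $q_{0j},q_{kj}\ge1$ cannot rescue this step. Note also that the Lahiri--Dewan proof you propose to \enquote{adapt} is not an SMT-at-three-values argument, so the adaptation you are counting on does not exist in the form you describe.

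The paper's route never touches $\ol{N}(r,0;P[f])$. It starts from the Milloux-type identity
\[
\frac{1}{f^{d(P)}}=\frac{bP[f]}{f^{d(P)}}-\frac{(bP[f])'}{bP[f]}\cdot\frac{bP[f]}{f^{d(P)}}\cdot\frac{bP[f]-1}{(bP[f])'},
\]
which, with the lemma on the logarithmic derivative and Lemma \ref{lem1}, gives Lemma \ref{lem4}:
\[
d(P)T(r,f)\le d(P)N\Bigl(r,\frac{1}{f}\Bigr)+\ol{N}(r,\infty;f)+N\Bigl(r,\frac{1}{bP[f]-1}\Bigr)-N\Bigl(r,\frac{1}{(bP[f])'}\Bigr)+S(r,f).
\]
The whole game is then to absorb $d(P)N(r,1/f)$ into the \emph{negative} term $-N(r,1/(bP[f])')$: a zero of $f$ of multiplicity $q\le k$ is a zero of $(P[f])'$ of order at least $qq_{\ast}-1$, and one of multiplicity $q\ge k+1$ is a zero of order at least $qd(P)-\nu-1$ (Lemma \ref{lem5}). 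The residue of that absorption is $\ol{N}(r,0;f)+(d(P)-q_{\ast})N_{k)}(r,1/f)+\nu\ol{N}_{(k+1}(r,1/f)$, and the factor $k+1$ in the conclusion comes from the final estimate $\ol{N}_{(k+1}(r,1/f)\le\frac{1}{k+1}T(r,f)$ in the proof of Theorem \ref{th3} --- not from $N_{k}(r,0;f)\le k\,\ol{N}(r,0;f)$ as you suggest. Your case split of the zeros of $f$ by multiplicity $\le k$ versus $\ge k+1$ is the right instinct, but it must be applied to the zeros of $(P[f])'$ lying over zeros of $f$ inside a Milloux-type inequality, not to $\ol{N}(r,0;P[f])$ inside the second fundamental theorem.
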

\begin{exm} Let us take $f(z)=e^{z}$ and $$P[f]=f^{5}(f')^{3}-f^{3}(f')^{5}.$$ Then $P[f]\equiv 0$. Thus the condition (i) in Theorem \ref{th3} is necessary.
\end{exm}
%\begin{rem} Thus the Theorem \ref{th3} extends and generalizes Theorem F.
%\end{rem}
\begin{rem} If $M[f]$ is differential monomial and $f$ is a transcendental meromorphic function, then $M[f]\not\equiv 0$.  Thus for the case of a differential monomial, the condition (i) in Theorem \ref{th3} is redundant.
\end{rem}
%\begin{rem} Under the suppositions of Theorem \ref{th3}, it is clear that $P[f]$ is either identically zero or transcendental.
%\end{rem}
\begin{rem} Under the suppositions of Theorem \ref{th3}, it is clear that either $P[f]$ is identically zero or the equation $P[f]=1$ has infinitely many solutions.
\end{rem}
%%%%%%%%%%%%%%%%%%%%%%%%%%%%%%%%%%%%%%%%%%%%%%%%%%%%%%%%%%%%%%%%%%%%%%%%%%%%%%%%%%%%%%%%%%%%%%%%%%%%%%%%%%%%%%%%%%%%%%%%%%%%
\section{Lemmas}
\begin{lem}\label{lem1} (\cite{jh}) For a non constant meromorphic function $g$, the following equality holds:
$$N\left(r,\frac{g'}{g}\right)-N\left(r,\frac{g}{g'}\right)=\ol{N}(r,g)+N\left(r,\frac{1}{g}\right)-N\left(r,\frac{1}{g'}\right).$$
\end{lem}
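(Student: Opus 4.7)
The identity is local: each of the four counting functions is an integral of a divisor against $dt/t$, so it suffices to verify that at every point $z_0$ inside the disc of radius $r$ the orderwise contribution to the left-hand side equals that to the right-hand side. My plan is therefore to case-split on the behaviour of $g$ and $g'$ at $z_0$ and read off both sides directly from the local Laurent expansions, then integrate.

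If $z_0$ is a pole of $g$ of order $p\geq 1$, then $g'$ has a pole of order $p+1$, so $g'/g$ has a simple pole and $g/g'$ has a simple zero: the left-hand side picks up $1-0=1$, while on the right $\overline{N}(r,g)$ contributes $1$ and the remaining two terms contribute $0$. If $z_0$ is a zero of $g$ of order $m\geq 1$, then $g'$ has a zero of order $m-1$, so $g'/g$ again has a simple pole at $z_0$ and $g/g'$ is regular there: the left-hand side picks up $1$, and on the right $\overline{N}(r,g)$ contributes $0$ while $N(r,1/g)-N(r,1/g')=m-(m-1)=1$. If $z_0$ is a zero of $g'$ of order $s\geq 1$ with $g(z_0)$ finite and nonzero, then $g'/g$ has a zero of order $s$ and $g/g'$ a pole of order $s$: the left-hand side contributes $0-s=-s$, and the right-hand side contributes $0+0-s=-s$. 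At any other point in the disc, none of the four terms contributes.

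Summing these orderwise identities over $|z_0|\leq r$ gives the same equality for the unintegrated counting functions $n(r,\cdot)$, and integrating against $dt/t$ (and tacking on the standard $n(0,\cdot)\log r$ boundary contribution, which obeys the same divisor arithmetic) yields the stated equality.

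The only step where I expect to need some care is the second case, because this is where the bookkeeping is least symmetric: a zero of $g$ of arbitrary order $m$ still contributes only $1$ to $N(r,g'/g)$, since the logarithmic derivative has only simple poles, and the ``missing'' $m-1$ is transported cleanly by the difference $N(r,1/g)-N(r,1/g')$. This matching is precisely what forces the four counting functions into the identity as stated, and is where a careless attempt to drop or relabel one of the terms on the right would fail.
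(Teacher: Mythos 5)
The paper does not prove this lemma at all; it simply cites it to Jiang and Huang \cite{jh}, so there is no in-paper argument to compare against. Your local divisor computation is correct and complete: the three cases (pole of $g$, zero of $g$, zero of $g'$ away from the zeros and poles of $g$) exhaust the points where any of the four counting functions can pick up a contribution, the orderwise bookkeeping in each case is right (in particular the $m$ versus $m-1$ cancellation at a zero of $g$ of order $m$, which is exactly what produces the reduced counting function $\ol{N}(r,g)$ only for the poles), and passing from $n(t,\cdot)$ to $N(r,\cdot)$ by integration is immediate. This is the standard way to establish the identity, and your write-up fills a gap the paper leaves to the reference.
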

The next lemma plays the major role to prove Theorem \ref{th1}, which is a immediate corollary of Yamanoi's Celebrated Theorem(\cite{yam}). Yamanoi's Theorem is a correspondent result to the famous Gol'dberg Conjecture.
\begin{lem}(\cite{yam})\label{lem1.1} Let $f$ be a transcendental meromorphic function in $\mathbb{C}$ and let $k(\geq2)$ be an integer. Then
$$(k-1)\ol{N}(r,f)\leq N\left(r,\frac{1}{f^{(k)}}\right)+S^{*}(r,f),$$
where $S^{*}(r,f)=o(T(r,f))$ as $r\to\infty$ and $r\not\in E$, $E$ is a set of logarithmic density $0$.
\end{lem}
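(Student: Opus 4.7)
The plan is to obtain this estimate as an immediate specialization of Yamanoi's theorem on the Gol'dberg conjecture, rather than to reprove anything deep. Yamanoi's main result gives, for a transcendental meromorphic $f$ in $\mathbb{C}$ and integer $k\geq 2$, an inequality of the form
$$(k-1)\,\overline{N}(r,f)+N\!\left(r,\frac{1}{f}\right)\leq N\!\left(r,\frac{1}{f^{(k)}}\right)+S^{*}(r,f),$$
valid outside an exceptional set of logarithmic density zero. The first step is to quote this sharp form of the Gol'dberg conjecture from Yamanoi's paper; since $N(r,1/f)\geq 0$, dropping this non-negative term on the left-hand side yields exactly the inequality of Lemma \ref{lem1.1}.

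Before dropping the $N(r,1/f)$ term, I would verify two compatibility points. First, that the error term in Yamanoi's statement is of the correct shape: namely, $o(T(r,f))$ as $r\to\infty$ off a set of logarithmic density zero, which is precisely the $S^{*}(r,f)$-convention introduced in the Introduction. Second, that the hypothesis $k\geq 2$ matches the assumption of the lemma, and that no further smallness or regularity condition on $f$ beyond transcendence is needed. Both checks are entirely notational.

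The only real obstacle is, of course, Yamanoi's theorem itself, which is highly nontrivial; but once it is granted, the derivation is a one-line application, so no separate argument is required here. If for some reason one preferred to avoid invoking Yamanoi directly, a fallback plan would be the classical weaker estimate $\overline{N}(r,f)\leq N(r,1/f^{(k)})+S(r,f)$ obtained from the logarithmic derivative lemma applied to $f^{(k)}/f$ together with the pole-counting identity $N(r,f^{(k)})=N(r,f)+k\,\overline{N}(r,f)$; but this only yields the coefficient $1$ rather than $k-1$ on the left, which would not be sufficient for the proof of Theorem \ref{th1}. Hence the full strength of Yamanoi's theorem is essential.
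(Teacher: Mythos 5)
Your route---invoke Yamanoi's resolution of the Gol'dberg conjecture and specialize---is exactly what the paper does: the paper offers no proof of this lemma at all, only the citation to \cite{yam}, calling it an immediate corollary of Yamanoi's theorem. So the strategy is the intended one. The problem is the specific statement you attribute to Yamanoi: the inequality
$$(k-1)\,\overline{N}(r,f)+N\left(r,\frac{1}{f}\right)\leq N\left(r,\frac{1}{f^{(k)}}\right)+S^{*}(r,f)$$
is not his theorem, and it is in fact false. Take $f(z)=\tan z$ and $k=2$. Then $f''=2\tan z\,\sec^{2}z$ has simple zeros exactly at $z=n\pi$ and no others (since $\sec^{2}z$ never vanishes and $\tan z=\pm i$ has no solutions), so $N(r,1/f'')$, $N(r,1/f)$ and $\overline{N}(r,f)$ are each asymptotic to $T(r,f)\sim 2r/\pi$; your quoted inequality would then read $2T(r,f)\leq T(r,f)+o(T(r,f))$, a contradiction. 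Yamanoi's Theorem 1.1 is the inequality \emph{without} the $N(r,1/f)$ term on the left, which for transcendental $f$ (so that $f^{(k)}\not\equiv 0$ automatically) and $k\geq 2$ is verbatim the statement of the lemma, with the error term $o(T(r,f))$ outside a set of logarithmic density zero. Nothing needs to be dropped; the lemma is a direct quotation.

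The fix is simply to delete the spurious $N(r,1/f)$ term from the quoted theorem, after which your two compatibility checks (the shape of the exceptional set matching the $S^{*}$-convention, and the hypothesis $k\geq 2$) are exactly the right things to verify. Your closing observation is also correct and worth keeping: the elementary argument via the lemma of the logarithmic derivative applied to $f^{(k)}/f$ only produces the coefficient $1$ in place of $k-1$, which is not enough for the proof of Theorem \ref{th1}, so the full strength of Yamanoi's theorem really is essential here.
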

\begin{lem}\label{lem2} Let $f$ be a transcendental meromorphic function and $b(z)(\not\equiv0,\infty)$ be a small function  with respect to $f$. If $P[f]$ be a homogeneous differential polynomial of degree $d(P)$ and $q_{0j}(\geq1)$, then either $P[f]\equiv 0$ or, $b(z)P[f]$ can not be a non zero constant.
\end{lem}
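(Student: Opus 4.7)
The strategy is to argue by contradiction: suppose that $P[f]\not\equiv 0$ and $b(z)\,P[f]\equiv c$ for some non-zero constant $c$, so that $P[f]=c/b$ is itself a small function of $f$. The plan is to extract two incompatible estimates for the counting function $N(r,1/f)$ and thereby violate the transcendence of $f$.

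For the first estimate, I exploit the homogeneity of $P[f]$. Since every monomial $M_{j}[f]$ has total degree $d(P)$, one can write
$$\frac{P[f]}{f^{d(P)}}=\sum_{j=1}^{t}b_{j}\prod_{i=0}^{k}\left(\frac{f^{(i)}}{f}\right)^{q_{ij}}.$$
The Logarithmic Derivative Lemma, together with $T(r,b_{j})=S(r,f)$, then forces $m\!\left(r,P[f]/f^{d(P)}\right)=S(r,f)$. On the other hand, $P[f]=c/b$ gives $P[f]/f^{d(P)}=c/(bf^{d(P)})$, whose characteristic function equals $d(P)T(r,f)+S(r,f)$ and whose pole counting function is at most $d(P)N(r,1/f)+S(r,f)$. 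Comparing $T-N$ on the two sides yields $m(r,1/f)=S(r,f)$, and hence
$$N(r,1/f)=T(r,f)+S(r,f).$$

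For the second estimate, I invoke the hypothesis $q_{0j}\geq 1$: every monomial $M_{j}[f]$ carries $f$ as a factor, so at any zero of $f$ of multiplicity $m$ (outside the $S(r,f)$ exceptional set consisting of zeros and poles of the small coefficients $b$ and $b_{j}$) each $M_{j}[f]$ vanishes to order at least $m$, and therefore so does $P[f]$. This gives $N(r,1/f)\leq N(r,1/P[f])+S(r,f)$. But $P[f]=c/b$ implies $N(r,1/P[f])=N(r,b)=S(r,f)$. Combining with the first estimate forces $T(r,f)=S(r,f)$, contradicting the transcendence of $f$.

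I do not expect a genuine obstacle beyond careful bookkeeping of the $S(r,f)$ error terms contributed by the small functions $b$ and $b_{j}$: the essential mechanism is that the Logarithmic Derivative Lemma applied to the ratio $P[f]/f^{d(P)}$ controls $m(r,1/f)$ from one side, while the purely algebraic fact that $q_{0j}\geq 1$ implies $f \mid M_{j}[f]$ controls zeros of $P[f]$ at zeros of $f$ from the other.
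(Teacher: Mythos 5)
Your proposal is correct and follows essentially the same route as the paper's own proof: the Logarithmic Derivative Lemma applied to $P[f]/f^{d(P)}$ (using homogeneity) gives $m(r,1/f)=S(r,f)$, and the hypothesis $q_{0j}\geq 1$ forces $N(r,1/f)\leq N(r,1/P[f])+S(r,f)=S(r,f)$, so $T(r,f)=S(r,f)$, a contradiction. The only difference is cosmetic bookkeeping (you pass through $N(r,1/f)=T(r,f)+S(r,f)$ before combining, whereas the paper adds $m$ and $N$ directly), so the two arguments are the same.
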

\begin{proof} Let us assume that \bea\label{kabir1} b(z)P[f]\equiv C,\eea for some constant $C$.\\
If $C=0$, then there is nothing to prove. Thus we assume that  $C\not=0$. Then from equation (\ref{kabir1}) and Lemma of logarithmic derivative, we have \bea\label{kabir2} d(P)m(r,\frac{1}{f})=m\left(r,\frac{b(z)P[f]}{Cf^{d(P)}}\right)=S(r,f).\eea
Since $q_{0j}\geq 1$, it is clear from equation (\ref{kabir1}) that \bea\label{kabir3} N(r,0;f)\leq N(r,0;P[f])+S(r,f)=S(r,f).\eea
Thus $T(r,f)=S(r,f)$, which is absurd since $f$ is transcendental meromorphic function.
\end{proof}
\begin{lem}\label{lem3}
Let $f$ be a transcendental meromorphic function, then $$T\bigg(r,b(z)P[f]\bigg)=O(T(r,f))~~\text{and}~~S\bigg(r,b(z)P[f]\bigg)=S(r,f).$$
\end{lem}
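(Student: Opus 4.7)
The plan is to reduce everything to the Nevanlinna characteristic of $f$ via the standard bound on derivatives. Write $P[f]=\sum_{j=1}^{t} b_{j} M_{j}[f]$ with $M_{j}[f]=(f)^{q_{0j}}(f')^{q_{1j}}\cdots (f^{(k)})^{q_{kj}}$ and $T(r,b_{j})=S(r,f)$. The workhorse is the inequality $T(r,f^{(i)})\le (i+1)T(r,f)+S(r,f)$, which follows from the Logarithmic Derivative Lemma $m(r,f^{(i)}/f)=S(r,f)$ combined with the pole count $N(r,f^{(i)})=N(r,f)+i\,\overline{N}(r,f)\le (i+1)N(r,f)$, together with the standard properties $T(r,f^{n})=nT(r,f)$ and $T(r,fg)\le T(r,f)+T(r,g)+O(1)$.

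Applying these to each monomial gives
\[
T(r,M_{j}[f])\;\le\;\sum_{i=0}^{k}q_{ij}\,T(r,f^{(i)})\;\le\;\sum_{i=0}^{k}q_{ij}(i+1)\,T(r,f)+S(r,f)\;=\;\Gamma_{M_{j}}T(r,f)+S(r,f),
\]
and absorbing the small coefficient yields $T(r,b_{j}M_{j}[f])\le \Gamma_{M_{j}}T(r,f)+S(r,f)$. Summing over $1\le j\le t$ and then multiplying by $b(z)$ (using $T(r,b)=S(r,f)$) produces
\[
T(r,b(z)P[f])\;\le\;\Big(\sum_{j=1}^{t}\Gamma_{M_{j}}\Big)\,T(r,f)+S(r,f)\;=\;O(T(r,f)),
\]
which is the first assertion.

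For the second assertion, I would unpack the definition: $\phi(r)\in S(r,b(z)P[f])$ means $\phi(r)=o(T(r,b(z)P[f]))$ off an exceptional set of finite linear measure. The first assertion furnishes $T(r,b(z)P[f])\le C\,T(r,f)+S(r,f)$ for a constant $C>0$; since $f$ is transcendental, $T(r,f)\to\infty$, so $\phi(r)/T(r,f)\le C\,\phi(r)/T(r,b(z)P[f])\to 0$ on the same exceptional set, i.e., $\phi(r)=S(r,f)$. This is precisely the sense in which the two error classes coincide for the bookkeeping needed in the applications that follow in the proofs of Theorems~\ref{th1}--\ref{th3}.

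There is no substantive obstacle here; the proof is a routine exercise in First-Main-Theorem inequalities. The only care required is tracking the accumulation of $S(r,f)$ corrections across the $t$ monomials and across the various small coefficients $b(z),b_{1},\ldots,b_{t}$, all of which are absorbed cleanly because $T(r,b)=T(r,b_{j})=S(r,f)$ by hypothesis.
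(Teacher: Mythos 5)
Your proof is correct and is essentially the standard argument that the paper itself only gestures at (it simply cites Lemma 2.4 of Li--Yang rather than writing anything out): bound $T(r,f^{(i)})$ via the logarithmic derivative lemma and the pole count, sum over the monomials, and absorb the small coefficients. Nothing further to add.
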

\begin{proof} The proof is similar to the proof of the Lemma 2.4 of (\cite{ly}).
\end{proof}
%%%%%%%%%%%%%%%%%%%%%%%%%%%%%%%%%%%%%%%%%%%%%%%%%%%%%%%%%%%%%%%%%%%%%%%%%%%%%%%%%%%%%%%%%%%%%%%%%%%%%%%%%%%%%%%%%%%%%%%%%%%%%%%%%%%%%%%%%%%%%%%%%%
\begin{lem}\label{lem4} Let $f$ be a transcendental meromorphic function and $b(z)(\not\equiv0,\infty)$ be a small function  with respect to $f$. If $P[f]$ be a homogeneous differential polynomial of degree $d(P)$ and $q_{0j}(\geq1)$, then either $P[f]\equiv 0,$ or
\bea\label{p1}\nonumber d(P) T(r,f)&\leq& d(P) N\left(r,\frac{1}{f}\right)+ \ol{N}\left(r,\infty;f\right)+N\left(r,\frac{1}{b(z)P[f]-1}\right)\\
\nonumber &&-N\left(r,\frac{1}{(b(z)P[f])'}\right)+S(r,f).\eea
\end{lem}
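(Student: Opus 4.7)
The plan is to assume $P[f]\not\equiv 0$, set $F=b(z)P[f]$, and bound $d(P)\,T(r,f)$ by first passing from $m(r,1/f^{d(P)})$ to $m(r,1/F)$ via the homogeneity of $P$, and then converting $m(r,1/F)$ into a counting-function expression using a multiplicative decomposition together with Lemma \ref{lem1}. Note that by Lemma \ref{lem2} the function $F$ cannot be a nonzero constant, so in particular $F\not\equiv 1$ and $1/(F-1)$ is a bona fide meromorphic object.

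The first step I would carry out is to use $\sum_{i=0}^{k} q_{ij}=d(P)$ for each $j$ to write
$$\frac{P[f]}{f^{d(P)}}=\sum_{j=1}^{t} b_{j}\prod_{i=1}^{k}\left(\frac{f^{(i)}}{f}\right)^{q_{ij}},$$
so the logarithmic derivative lemma (applied termwise, with the $b_{j}$ and $b$ small with respect to $f$) gives $m(r,F/f^{d(P)})=S(r,f)$. Combining this with the First Fundamental Theorem applied to $1/f^{d(P)}$ produces
$$d(P)\,T(r,f)\le m\!\left(r,\frac{1}{F}\right)+d(P)\,N\!\left(r,\frac{1}{f}\right)+S(r,f),$$
reducing everything to an estimate on $m(r,1/F)$.

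The key trick for $m(r,1/F)$ is the decomposition
$$\frac{1}{F}=\frac{F-1}{F'}\cdot\left(\frac{F'}{F-1}-\frac{F'}{F}\right),$$
which isolates a pure logarithmic-derivative factor. By Lemma \ref{lem3} one has $T(r,F)=O(T(r,f))$ and hence $S(r,F)=S(r,f)$, so $m(r,F'/(F-1))$ and $m(r,F'/F)$ are both $S(r,f)$, yielding $m(r,1/F)\le m(r,(F-1)/F')+S(r,f)$. Applying the First Fundamental Theorem to $(F-1)/F'$ turns this into $N(r,F'/(F-1))-N(r,(F-1)/F')+S(r,f)$, and Lemma \ref{lem1} with $g=F-1$ rewrites the right-hand side as $\ol{N}(r,F-1)+N(r,1/(F-1))-N(r,1/F')+S(r,f)$.

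To finish, since $b$ and the coefficients $b_{j}$ are small and the poles of $P[f]$ lie among the poles of $f$, one has $\ol{N}(r,F-1)=\ol{N}(r,F)\le \ol{N}(r,\infty;f)+S(r,f)$; substituting back through the chain of estimates produces the stated inequality. The only genuinely non-routine step is spotting the multiplicative split of $1/F$ that exposes the logarithmic-derivative piece $F'/(F-1)-F'/F$; once that identity is in hand, the rest is bookkeeping with the First Fundamental Theorem together with the three lemmas already in the excerpt.
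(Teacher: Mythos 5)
Your proposal is correct and follows essentially the same route as the paper: the multiplicative identity you use for $1/F$ is an algebraic rearrangement of the paper's identity for $1/f^{d(P)}$, and both arguments then proceed via the logarithmic derivative lemma, the First Fundamental Theorem applied to $(F-1)/F'$, Lemma \ref{lem1} with $g=F-1$, and the estimate $\ol{N}(r,F-1)\leq\ol{N}(r,\infty;f)+S(r,f)$. No substantive differences.
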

\begin{proof} Assume that $P[f]\not\equiv 0$. Then by Lemma \ref{lem2}, we have $b(z)P[f]$ can
not be a non zero constant. Thus we can write
 $$\frac{1}{f^{d(P)}}=\frac{b(z) P[f]}{f^{d(P)}}-\frac{(b(z) P[f])'}{b(z) P[f]}\frac{b(z) P[f]}{f^{d(P)}}\frac{(b(z) P[f]-1)}{(b(z) P[f])'}.$$
Thus in view of first fundamental theorem, Lemma \ref{lem3} and Lemma \ref{lem1}, we have
\bea\label{kabir5} && d(P) m(r,\frac{1}{f})\\
\nonumber &\leq& m\left(r,\frac{b(z)P[f]-1}{(b(z)P[f])'}\right)+S(r,f)\\
\nonumber&\leq& T\left(r,\frac{b(z)P[f]-1}{(b(z)P[f])'}\right)-N\left(r,\frac{b(z)P[f]-1}{(b(z)P[f])'}\right)+S(r,f)\\
\nonumber&\leq& N\left(r,\frac{(b(z)P[f])'}{b(z)P[f]-1}\right)-N\left(r,\frac{b(z)P[f]-1}{(b(z)P[f])'}\right)+S(r,f)\\
\nonumber&\leq& \ol{N}(r,\infty;f)+N\left(r,\frac{1}{b(z)P[f]-1}\right)-N\left(r,\frac{1}{(b(z)P[f])'}\right)+S(r,f)\eea
Thus \beas d(P) T(r,f) &\leq& d(P) N\left(r,\frac{1}{f}\right)+ \ol{N}(r,\infty;f)+N\left(r,\frac{1}{b(z)P[f]-1}\right)\\
&&-N\left(r,\frac{1}{(b(z)P[f])'}\right)+S(r,f).\eeas
\end{proof}
%%%%%%%%%%%%%%%%%%%%%%%%%%%%%%%%%%%%%%%%%%%%%%%%%%%%%%%%%%%%%%%%%%%%%%%%%%%%%%%%%%%%%%%%%%%%%%%%%%%%%%%%%%%%%%%%%%%%%%%%%%%%%%%%%%%%%%%%%%%%%%%%%%%%%%%%%
\begin{lem}\label{lem5} Let $f$ be a transcendental meromorphic function and $b(z)(\not\equiv0,\infty)$ be a small function  with respect to $f$. If $P[f]$ be a homogeneous differential polynomial of degree $d(P)$ and $q_{0j}(\geq1)$, $q_{kj}(\geq1)$, then either $P[f]\equiv 0,$ or
\bea\label{kabir4}&& d(P) T(r,f)\\
\nonumber&\leq&\ol{N}(r,\infty;f)+\ol{N}(r,0;f)+\nu\ol{N}_{(k+1}\left(r,\frac{1}{f}\right)+\left(d(P)-q_{\ast}\right)N_{k)}\left(r,\frac{1}{f}\right)\\
\nonumber&&+\ol{N}\left(r,\frac{1}{P[f]-1}\right)-N_{0}\left(r,\frac{1}{(P[f])'}\right)+S(r,f).\eea
where $N_{0}\left(r,\frac{1}{(P[f])'}\right)$ is the counting function of the zeros of $(P[f])'$ but not the zeros of $f(P[f]-1)$ and  $q_{\ast}=\min\limits_{j=1}^{t}\{q_{0j}\}$.
\end{lem}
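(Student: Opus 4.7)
The plan is to modify the proof of Lemma \ref{lem4} in two places: absorb the multiplicity drop at zeros of $P[f]-1$ so that the counting function there becomes reduced, and refine the crude $d(P)N(r,1/f)$ term by tracking how a zero of $f$ of order $m$ propagates through $P[f]$ and hence $(P[f])'$. Assuming $P[f]\not\equiv 0$, Lemma \ref{lem2} applied with the constant small function $b\equiv 1$ guarantees $P[f]$ is non-constant, so $(P[f])'\not\equiv 0$ and the identity
$$\frac{1}{f^{d(P)}}=\frac{P[f]}{f^{d(P)}}-\frac{(P[f])'}{P[f]}\cdot\frac{P[f]}{f^{d(P)}}\cdot\frac{P[f]-1}{(P[f])'}$$
is legitimate. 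Since $P$ is homogeneous, $P[f]/f^{d(P)}$ is a polynomial in the logarithmic derivatives $f^{(i)}/f$ with small-function coefficients, so its proximity function is $S(r,f)$; combined with the logarithmic derivative lemma applied to $(P[f])'/P[f]$, the first fundamental theorem, and Lemma \ref{lem1} with $g=P[f]-1$ (using $\ol{N}(r,P[f])\le\ol{N}(r,\infty;f)+S(r,f)$), this yields the Lemma \ref{lem4}-style bound
$$d(P)T(r,f)\le d(P)N(r,1/f)+\ol{N}(r,\infty;f)+N(r,1/(P[f]-1))-N(r,1/(P[f])')+S(r,f).$$

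I would then split the zeros of $(P[f])'$ into three disjoint classes — disjoint because $f(z_0)=0$ forces $P[f](z_0)=0\neq 1$: those on $P[f]=1$, those on $f=0$, and the remainder counted by $N_0(r,1/(P[f])')$. A zero of $P[f]-1$ of multiplicity $\mu$ contributes $\mu-1$ to $N(r,1/(P[f])')$, so the first class contributes exactly $N(r,1/(P[f]-1))-\ol{N}(r,1/(P[f]-1))$, hence
$$N(r,1/(P[f]-1))-N(r,1/(P[f])')\le\ol{N}(r,1/(P[f]-1))-N_{f=0}(r,1/(P[f])')-N_0(r,1/(P[f])').$$

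The heart of the argument is to estimate the per-zero contribution of $d(P)N(r,1/f)-N_{f=0}(r,1/(P[f])')$ in two regimes. At a zero $z_0$ of $f$ of order $m\le k$, the factor $f^{q_{0j}}$ alone forces $M_j[f]$ to vanish at $z_0$ to order at least $q_{0j}m\ge q_{\ast}m$, so $P[f]$ vanishes to order $\ge q_{\ast}m$ and $(P[f])'$ to order $\ge q_{\ast}m-1$, giving a contribution bounded by $d(P)m-(q_{\ast}m-1)=(d(P)-q_{\ast})m+1$. At a zero of order $m\ge k+1$, every $f^{(i)}$ with $0\le i\le k$ vanishes at $z_0$ to order $m-i$, so by homogeneity the order of $M_j[f]$ at $z_0$ equals $\sum_{i=0}^{k}q_{ij}(m-i)=md(P)-(\Gamma_{M_j}-d(P))\ge md(P)-\nu$, whence $(P[f])'$ vanishes to order $\ge md(P)-\nu-1$ and the contribution is at most $\nu+1$. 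Summing over all zeros of $f$ and using $\ol{N}(r,0;f)=\ol{N}_{k)}(r,1/f)+\ol{N}_{(k+1}(r,1/f)$ to pool the \emph{$+1$}'s from both regimes produces
$$d(P)N(r,1/f)-N_{f=0}(r,1/(P[f])')\le(d(P)-q_{\ast})N_{k)}(r,1/f)+\nu\,\ol{N}_{(k+1}(r,1/f)+\ol{N}(r,0;f),$$
and substitution into the Lemma \ref{lem4}-style bound delivers the stated inequality. The main obstacle is exactly this multiplicity accounting: one has to recognise that the regimes $m\le k$ and $m\ge k+1$ require two genuinely different lower bounds on the order of $P[f]$ at $z_0$ — one driven only by the $f^{q_{0j}}$ factor and the other exploiting the homogeneity bound $\Gamma_{M_j}-d(P)\le\nu$ — and to verify that the bookkeeping collapses to the precise constants $d(P)-q_{\ast}$ and $\nu$ together with a single $\ol{N}(r,0;f)$ appearing in the statement.
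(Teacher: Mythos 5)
Your proposal is correct and follows essentially the same route as the paper: it invokes the Lemma \ref{lem4}-type estimate, splits the zeros of $(P[f])'$ into those lying over $P[f]=1$, those lying over $f=0$, and the remainder $N_{0}$, and then performs the same two-case multiplicity count at a zero of $f$ of order $m\leq k$ (driven by the factor $f^{q_{0j}}$, giving the constant $d(P)-q_{\ast}$) versus $m\geq k+1$ (driven by homogeneity and $\Gamma_{M_j}-d(M_j)\leq\nu$, giving the constant $\nu$), pooling the two $+1$'s into $\ol{N}(r,0;f)$ exactly as in the paper's inequalities (\ref{eq1}) and (\ref{eq2}). The only cosmetic difference is that you take $b\equiv 1$ from the outset, which if anything is cleaner than the paper's mixing of $b(z)P[f]$ and $P[f]$.
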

\begin{proof} Clearly \bea\label{eq1} && d(P)N\left(r,\frac{1}{f}\right)+N\left(r,\frac{1}{P[f]-1}\right)-N\left(r,\frac{1}{(P[f])'}\right)\\
\nonumber&=& d(P)N\left(r,\frac{1}{f}\right)+\ol{N}(r,1;P[f])+\sum\limits_{t=2}^{\infty}\ol{N}_{(t}(r,1;P[f])- N(r,0;(P[f])')\\
\nonumber&\leq& d(P) N\left(r,\frac{1}{f}\right)-N_{\star}\left(r,\frac{1}{(P[f])'}\right)+\ol{N}\left(r,\frac{1}{P[f]-1}\right)-N_{0}\left(r,\frac{1}{(P[f])'}\right),
\eea
where $N_{\star}\left(r,\frac{1}{(P[f])'}\right)$ is the counting function of the zeros of $(P[f])'$ which comes from the zeros of $f$.\par
Let $z_{0}$ be a zero of $f$ with multiplicity $q$ such that $b_{j}(z_{0})\not=0,\infty$.\\
\textbf{Case-1} If $q\leq k$, then $z_{0}$ is the zero of $(P[f])'$ of order atleast $qq_{\ast}-1$.\\
\textbf{Case-2} If $q\geq k+1$, then $z_{0}$ is the zero of $(P[f])'$ of order atleast
\beas &&\min\limits_{1\leq j\leq t}\{q_{0j}q+q_{1j}(q-1)+\ldots+q_{kj}(q-k)\}-1\\
&=&q d(P)-\max\limits_{1\leq j\leq t}\{q_{1j}+2q_{2j}+\ldots+kq_{kj}\}-1\\
&=&q d(P)-\nu-1,\eeas
where
\beas q d(P)-\nu-1&\geq& (k d(P)-\nu)+(d(P)-1)>0.\eeas
Thus \bea\label{eq2} d(P) N\left(r,\frac{1}{f}\right)&\leq& N_{\star}\left(r,\frac{1}{(P[f])'}\right)+(\nu+1)\ol{N}_{(k+1}\left(r,\frac{1}{f}\right)\\
\nonumber &&+ (d(P)-q_{\ast}) N_{k)}\left(r,\frac{1}{f}\right)+\ol{N}_{k)}\left(r,\frac{1}{f}\right)+S(r,f).\eea
Assume that $P[f]\not\equiv0$. Now using Lemma \ref{lem4} and the inequalities (\ref{eq1}),(\ref{eq2}), we have
\beas&& d(P) T(r,f)\\
&\leq& d(P) N\left(r,\frac{1}{f}\right)+ \ol{N}(r,\infty;f)+N\left(r,\frac{1}{b(z)P[f]-1}\right)\\
&&-N\left(r,\frac{1}{(b(z)P[f])'}\right)+S(r,f)\\
&\leq& N_{\star}\left(r,\frac{1}{(P[f])'}\right)+(\nu+1)\ol{N}_{(k+1}\left(r,\frac{1}{f}\right)+ (d(P)-q_{\ast}) N_{k)}\left(r,\frac{1}{f}\right)\\
&&+\ol{N}_{k)}\left(r,\frac{1}{f}\right)+ \ol{N}(r,\infty;f)+N\left(r,\frac{1}{b(z)P[f]-1}\right)\\
&&-N\left(r,\frac{1}{(b(z)P[f])'}\right)+S(r,f)\\
&\leq&\ol{N}(r,\infty;f)+\ol{N}(r,0;f)+\nu\ol{N}_{(k+1}\left(r,\frac{1}{f}\right)+ (d(P)-q_{\ast}) N_{k)}\left(r,\frac{1}{f}\right)\\
&&+\ol{N}\left(r,\frac{1}{P[f]-1}\right)-N_{0}\left(r,\frac{1}{(P[f])'}\right)+S(r,f).\eeas
\end{proof}
%%%%%%%%%%%%%%%%%%%%%%%%%%%%%%%%%%%%%%%%%%%%%%%%%%%%%%%%%%%%%%%%%%%%%%%%%%%%%%%%%%%%%%%%%%%%%%%%%%%%%%%%%%%%%%%%%%%%%%%%%%%%%%%%%%%
\section {Proof of the Theorems}
%%%%%%%%%%%%%%%%%%%%%%%%%%%%%%%%%%%%%%%%%%%%%%%%%%%%%%%%%%%%%%%%%%%%%%%%%%%%%%%%%%%%%%%%%%%%%%%%%%%%%%%%%%%%%%%%%%%%%%%%%%%%%%%%%%%
\begin{proof} [\textbf{Proof of Theorem \ref{th1}}] Given that $f$ is a transcendental meromorphic function and  $k\geq2$, $q_{0j}\geq2$, $q_{kj}\geq2$ for $1\leq j\leq t$. Let $P[f]\not\equiv 0$. Now
\bea\label{bcs}(q_{0}-1)N(r,0;f)+(q_{k}-1)N(r,0;f^{(k)})\leq N(r,0;(P[f])').\eea
Now in view of equation (\ref{bcs}) and Lemmas \ref{lem1.1} and \ref{lem4}, we have
\beas && d(P) T(r,f)\\
&\leq& d(P) N\left(r,\frac{1}{f}\right)+ \ol{N}(r,\infty;f)+N\left(r,\frac{1}{b(z)P[f]-1}\right)\\
&&-N\left(r,\frac{1}{(b(z)P[f])'}\right)+S(r,f)\\
&\leq& \left(d(P)-(q_{0}-1)\right) N\left(r,\frac{1}{f}\right)+\ol{N}(r,\infty;f)+N\left(r,\frac{1}{b(z)P[f]-1}\right)\\
&&-(q_{k}-1)N(r,0;f^{(k)})+S(r,f)\\
&\leq& \left(d(P)-(q_{0}-1)\right) N\left(r,\frac{1}{f}\right)+\left(1-(k-1)(q_{k}-1)\right)\ol{N}(r,\infty;f)\\
&&+N\left(r,\frac{1}{b(z)P[f]-1}\right)+S^{*}(r,f)+S(r,f)\\
&\leq& (d(P)-q_{0}+1)) T(r,f)+N\left(r,\frac{1}{b(z)P[f]-1}\right)+S^{*}(r,f).\eeas
i.e.,
\beas T(r,f) \leq \frac{1}{q_{0}-1}N\left(r,\frac{1}{P[f]-1}\right)+S^{*}(r,f).\eeas
This completes the proof.
\end{proof}
\begin{proof} [\textbf{Proof of Theorem \ref{th2}}] If $P[f]\not\equiv 0$, then using  Lemma \ref{lem5}, we have
\bea\label{eq5.11}\nonumber && d(P) T(r,f)\\
\nonumber&\leq&\ol{N}(r,\infty;f)+\ol{N}(r,0;f)+\nu\ol{N}_{(k+1}\left(r,\frac{1}{f}\right)+ (d(P)-q_{\ast}) N_{k)}\left(r,\frac{1}{f}\right)\\
\nonumber&&+\ol{N}\left(r,\frac{1}{P[f]-1}\right)-N_{0}\left(r,\frac{1}{(P[f])'}\right)+S(r,f)\eea
That is, 
\bea\label{eq5.1} d(P) T(r,f)&\leq& 2T(r,f)+(\nu-d(P)+q_{\ast})\ol{N}_{(k+1}\left(r,\frac{1}{f}\right)\\
\nonumber&& +(d(P)-q_{\ast}) \left(\ol{N}_{(k+1}\left(r,\frac{1}{f}\right)+N_{k)}\left(r,\frac{1}{f}\right)\right)\\
\nonumber&&+\ol{N}\left(r,\frac{1}{P[f]-1}\right)-N_{0}\left(r,\frac{1}{(P[f])'}\right)+S(r,f)\\
\nonumber&\leq& (\nu+2)T(r,f)+\ol{N}\left(r,\frac{1}{P[f]-1}\right)+S(r,f).
\eea
i.e.,
\beas T(r,f) \leq \frac{1}{d(P)-\nu-2}\ol{N}\left(r,\frac{1}{P[f]-1}\right)+S(r,f).\eeas
This completes the proof.
\end{proof}
\begin{proof} [\textbf{Proof of Theorem \ref{th3} }] If $P[f]\not\equiv 0$, then from the inequality (\ref{eq5.1}), we have
\bea\label{eq5}\nonumber && (q_{\ast}-2) T(r,f)\\
\nonumber&\leq& (\nu-d(P)+q_{\ast})\ol{N}_{(k+1}\left(r,\frac{1}{f}\right)+\ol{N}\left(r,\frac{1}{P[f]-1}\right)\\
\nonumber&& -N_{0}\left(r,\frac{1}{(P[f])'}\right)+S(r,f)\\
\nonumber&\leq& \frac{\nu-d(P)+q_{\ast}}{k+1}T(r,f)+\ol{N}\left(r,\frac{1}{P[f]-1}\right)+S(r,f).
\eea
i.e.,
\beas T(r,f) \leq \frac{(k+1)}{d(P)+k q_{\ast}-\nu-2(k+1)}\ol{N}\left(r,\frac{1}{P[f]-1}\right)+S(r,f).\eeas
This completes the proof.
\end{proof}
%%%%%%%%%%%%%%%%%%%%%%%%%%%%%%%%%%%%%%%%%%%%%%%%%%%%%%%%%%%%%%%%%%%%%%%%%%%%%%%%%%%%%%%%%%%%%%%%%%%%%%%%%%%%%%%%%%%%%%%%%%%%%%%%%%%
%%%%%%%%%%%%%%%%%%%%%%%%%%%%%%%%%%%%%%%%%%%%%%%%%%%%%%%%%%%%%%%%%%%%%%%%%%%%%%%%%%%%%%%%%%%%%%%%%%%%%%%%%%%%%%%%%%%%%%%%%%%%%%%%%%%

\end{document}